\documentclass[10pt,dvipsnames]{amsart}
\usepackage{hyperref}
\usepackage{amssymb, amscd, amsmath, amsfonts, amsthm}
\usepackage{stmaryrd}
\usepackage{verbatim}


 %


%
%
%
%
\def\unprotectedboldentry#1{\textcolor{Red}{\textbf{#1}}}
\def\boldentry{\protect\unprotectedboldentry}
\usepackage{tikz}
\usetikzlibrary{calc}
\usepackage{ifthen}
\newcommand{\tikztableau}[2][scale=0.6,every node/.style={font=\small}]{
    \def\newtableau{#2}
    \begin{array}{c}
    \begin{tikzpicture}[#1]
    \coordinate (x) at (-0.5,0.5);
    \coordinate (y) at (-0.5,0.5);
    \foreach \row in \newtableau {
        \coordinate (x) at ($(x)-(0,1)$);
        \coordinate (y) at (x);
        \foreach \entry in \row {
            \ifthenelse{\equal{\entry}{X}}
               {
                \node (y) at ($(y) + (1,0)$) {};
                \fill[color=gray!10] ($(y)-(0.5,0.5)$) rectangle +(1,1);
                \draw[color=gray] ($(y)-(0.5,0.5)$) rectangle +(1,1);
               }
               {
                \ifthenelse{\equal{\entry}{\boldentry X}}
                   {
                    \node (y) at ($(y) + (1,0)$) {};
                    \fill[color=gray] ($(y)-(0.5,0.5)$) rectangle +(1,1);
                    \draw ($(y)-(0.5,0.5)$) rectangle +(1,1);
                   }
                   {
                    \node (y) at ($(y) + (1,0)$) {\entry};
                    \draw ($(y)-(0.5,0.5)$) rectangle +(1,1);
                   }
               }
            }
        }
    \end{tikzpicture}
    \end{array}}


\def\uu{\mathbf u}
\def\hh{\mathbf h}
\def\bb{\mathbf b}
\def\AA{\mathbb{A}}
\def\BB{\mathbb{B}}

\def\upgraph{\mathcal G^{\uparrow}}
\def\downgraph{\mathcal G^{\downarrow}}
\def\ascomp{\operatorname{ascomp}}

\def\Strong{\operatorname{Strong}}
\newcommand\kschur[1][k]{s^{(#1)}}
\newcommand\dualkschur{\widetilde F}
\newcommand\nckschur[1][k]{\mathfrak{s}^{(#1)}}
\newcommand\id{\textrm{\rm id}}

\def\weakstrip{\leadsto}
\def\strongstrip{\rightarrowtriangle}
\def\size{\operatorname{size}}

\newcommand{\ribbonfcn}[1]{\overline{s_{#1}}}
\def\kBounded{\mathcal B^{(k)}}
\def\Langle{\left\langle}
\def\Rangle{\right\rangle}

\newtheorem{Theorem}{Theorem}[section]
\newtheorem{Proposition}[Theorem]{Proposition}
\newtheorem{Corollary}[Theorem]{Corollary}
\newtheorem{Lemma}[Theorem]{Lemma}
\newtheorem{Example}[Theorem]{Example}
\newtheorem{Remark}[Theorem]{Remark}

\allowdisplaybreaks
\frenchspacing

\begin{document}

\title{Pieri operators on the affine nilCoxeter algebra}
\author{Chris Berg \and Franco Saliola \and Luis Serrano}
\date{\today}

\begin{abstract}
We study a family of operators on the affine nilCoxeter algebra. We use these
operators to prove conjectures of Lam, Lapointe, Morse, and Shimozono regarding
strong Schur functions. 
\end{abstract}

\maketitle

\section{Introduction}

The $k$-Schur functions of Lapointe, Lascoux and Morse \cite{LLM03} first arose
in the study of Macdonald polynomials. Since then, their study has flourished;
see for instance \cite{LM03, LM05, LM07, LS07, LLMS10, Lam10} and the
references therein. This is due, in part, to an important geometric
interpretation of the Hopf algebra $\Lambda_{(k)}$ of $k$-Schur functions and
its dual Hopf algebra $\Lambda^{(k)}$: these algebras are isomorphic to the
homology and cohomology of the affine Grassmannian in type~A \cite{Lam08}.
Under this isomorphism, the $k$-Schur functions map to the Schubert basis of
the homology and the dual $k$-Schur functions (also called the affine Schur
functions) map to the Schubert basis of the cohomology.

An important problem in the theory of $k$-Schur functions is to find a
$k$-Littlewood--Richardson rule, namely, a combinatorial interpretation for the
(nonnegative) coefficients in the expansion
\begin{equation}\label{eq:klr}
\kschur_{\mu} \kschur_{\nu} = \sum_{\lambda} c^{\lambda, (k)}_{\mu, \nu} \kschur_{\lambda}.
\end{equation}
The $c^{\lambda, (k)}_{\mu, \nu}$ are called the
$k$-Littlewood--Richardson-coefficients, and are of high relevance in
combinatorics and geometry. It was proved by Lapointe and Morse \cite{LM08}
that special cases of these coefficients yield the 3-point Gromov--Witten invariants. The 3-point Gromov--Witten invariants are
the structure constants of the quantum cohomology of the Grassmanian; they
count the number of rational curves of a fixed degree in the Grassmannian.

As an approach to finding the $k$-Littlewood--Richardson coefficients, Lam
\cite{Lam06} identified $\Lambda_{(k)}$ with the affine Fomin--Stanley subalgebra
$\BB$ of the affine nilCoxeter algebra $\AA$ of the affine symmetric group $W$.
Specifically, he constructed a family of elements $\nckschur_{\lambda}\in\BB$
that map under this isomorphism to the $k$-Schur functions
$\kschur_{\lambda}$. Furthermore, he proved \cite[Proposition 42]{Lam06} that
finding the $k$-Littlewood--Richardson rule is equivalent to finding the
expansion of the $\nckschur_{\lambda}$ in the ``standard basis'' $\uu_w$ of
$\AA$. Explicitly, he proved that the coefficients in \eqref{eq:klr} appear as
coefficients in the expansion
\begin{equation}\label{eq:expansion}
\nckschur_\lambda = \sum_{w\in W} d_\lambda^w \uu_w.
\end{equation}

In this article, we develop a family of operators on $\AA$, which will facilitate the study of the $\nckschur_\lambda$, and we prove certain conjectures regarding a family of functions that generalize the $k$-Schur functions $\kschur_\lambda$. Each of these is described in more detail below.

\subsection{The Pieri operators}
Lam, Lapointe, Morse, and Shimozono \cite{LLMS10} constructed a labelled
directed graph $\downgraph$ on the elements of $W$, which encompasses the
strong order in $W$. In this article we study the operators on $\AA$ induced by
the Pieri operators of $\downgraph$ in the spirit of \cite{BMSvW00}. In Section \ref{sec:operators}, we develop
the main properties of these operators. More specifically, in Theorem
\ref{restriction} we prove that these operators are determined by their
restriction to $\BB$, in Theorem \ref{PieriRulePerp} we determine this
restriction, and in Theorem \ref{cor:commute} we prove that the operators
commute pairwise.

\subsection{Properties of strong Schur functions}
Lam, Lapointe, Morse, and Shimozono \cite{LLMS10} generalized the
$\kschur_\lambda$ to a larger set of functions called the strong Schur
functions $\Strong_{u/v}$, where $u$ and $v$ is any pair of elements in $W$. In
Section \ref{sec:strong}, we use the Pieri operators to prove a series of
conjectures of Lam, Lapointe, Morse, and Shimozono
\cite[Conjecture~4.18]{LLMS10} regarding these functions. Specifically,
\begin{enumerate}
\item[(a)] in Theorem \ref{thm:LLMS1} we prove that the $\Strong_{u/v}$ are
    symmetric functions;
\item[(b)] in Theorem \ref{thm:LLMS2} we prove that they belong to the algebra
    $\Lambda_{(k)}$; and
\item[(c)] in Theorem \ref{thm:LLMS3} we describe the coefficient of
    $\kschur_\lambda$ in $\Strong_{u/v}$, when $u$ and $v$ are $0$-Grassmannian
    elements, in terms of the structure constants of the cohomology ring of the
    affine flag variety.
\end{enumerate}
Note that (c) provides a combinatorial description of the \emph{skew} $k$-Schur
functions.

\subsection{Acknowledgements}
We would like to thank Nantel Bergeron, Sergey Fomin, Thomas Lam, Jennifer Morse, Anne Schilling, and Mike
Zabrocki for helpful discussions.

This research was facilitated by computer exploration using the open-source
mathematical software system \texttt{Sage}~\cite{sage} and its algebraic
combinatorics features developed by the \texttt{Sage-Combinat} community
\cite{sage-combinat}.

\section{Background and Notation}

\subsection{Affine symmetric group}
Fix a positive integer $k$. Let $W$ denote the affine symmetric group with
simple generators $s_0, s_1, \ldots, s_k$. There is an
interpretation of $W$ as the group of permutations $w: \mathbb Z \to \mathbb Z$
satisfying $w(i+k+1) = w(i) + k+1$ for all $i\in\mathbb Z$ and $\sum_{i=1}^{k+1}
w(i)=\sum_{i=1}^{k+1} i$. Let $t_{i,j}$ be the element of $W$ that interchanges the
integers $i$ and $j$ and fixes all integers not congruent to $i$ or $j$ modulo
$k+1$.

Let $W_0$ denote the subgroup of $W$ generated by $s_1, \dots, s_k$ and let
$W^0$ denote the set of minimal length coset representatives of $W/W_0$.
Elements of $W^0$ are called \emph{affine Grassmannian elements} or
\emph{$0$-Grassmannian elements}. There are bijections between $0$-Grassmannian
elements, $k$-bounded partitions, and $(k+1)$-cores. We will not review these
here, but refer the reader to \cite{LM05}. For a $k$-bounded partition $\lambda$,
we let $w_\lambda$ denote the corresponding element of $W^0$. Let $\kBounded$
denote the set of $k$-bounded partitions.

\subsection{Affine nilCoxeter algebra}
Let $\AA$ denote the \emph{affine nilCoxeter algebra} of $W$: this is the
algebra generated by $\uu_0, \uu_1, \dots, \uu_k$ with relations:
\begin{gather*}
    \uu_i^2 = 0 \text{ for all $i$}; \\
    \uu_i \uu_{i+1} \uu_i = \uu_{i+1} \uu_i \uu_{i+1}
        \text{ with $i+1$ taken modulo $k+1$}; \\
    \uu_i \uu_j = \uu_j \uu_i
        \text{ if $i - j \neq \pm1$ modulo $k+1$.}
\end{gather*}
It follows that a basis of $\AA$ is given by the elements
$\uu_w = \uu_{s_{i_1}}\uu_{s_{i_2}}\cdots\uu_{s_{i_l}}$,
where $w = s_{i_1}s_{i_2}\cdots s_{i_l}$ is a reduced word for $w \in W$.
We define an inner product on $\AA$ by
$\langle \uu_v, \uu_w \rangle_\AA = \delta_{u,v}$.

\subsection{Affine Fomin--Stanley subalgebra}
An element $w \in W$ is said to be \emph{cyclically decreasing} if there exists
a reduced factorization $s_{i_1}\cdots s_{i_j}$ of $w$ satisfying: each letter
occurs at most once; and, for all $m$, if $s_m$ and $s_{m+1}$ both appear in
the reduced factorization, then $s_{m+1}$ precedes $s_m$. If $D \subsetneq \{0,
1, \dots, k\}$, then there is a unique cyclically decreasing element $w_D$ with
letters $\{s_d : d \in D\}$. Let $\uu_D = \uu_{w_D}$ denote the corresponding
basis element of $\AA$. For $i \in \{0, 1, \dots, k\}$, let
\begin{align*}
\hh_i = \sum_{\substack{D \subset I \\ |D| = i}} \uu_D \in \AA.
\end{align*}
By a result of Thomas Lam \cite{Lam06}, the elements $\{\hh_i\}_{i \leq k}$ commute
and freely generate a subalgebra $\BB$ of $\AA$ called the \emph{affine
Fomin--Stanley subalgebra}. The elements $\hh_\lambda = \hh_{\lambda_1} \dots
\hh_{\lambda_t}$, for all $k$-bounded partitions $\lambda = (\lambda_1, \dots,
\lambda_t)$, form a basis of $\BB$.

\subsection{Symmetric functions}
\label{kspace}
Let $\Lambda$ denote the ring of symmetric functions. For a partition
$\lambda$, we let $m_\lambda$, $h_\lambda$, $e_\lambda$, $p_\lambda$,
$s_\lambda$ denote the monomial, homogeneous, elementary, power sum and Schur
symmetric function, respectively, indexed by $\lambda$. Each of these families
forms a basis of $\Lambda$. We recall the following change of bases formulae:
\begin{align*}
    h_\mu = \sum_{\lambda} K_{\lambda, \mu} s_\lambda
    \qquad \text{and} \qquad
    s_\lambda = \sum_{\mu} K_{\lambda, \mu} m_\mu
\end{align*}
where $K_{\lambda, \mu}$, called the \emph{Kostka number}, is the number
of semistandard tableaux of shape $\lambda$ and content $\mu$ \cite{Sta99}.

Let $\Lambda_{(k)}$ denote the subalgebra of $\Lambda$ generated by $h_0$,
$h_1$, $\dots$, $h_k$. The elements $h_\lambda$ with $\lambda_1 \leq k$ form a
basis of $\Lambda_{(k)}$. Let $\Lambda^{(k)} = \Lambda/I_k$ denote the quotient
of $\Lambda$ by the ideal $I_k$ generated by $m_\lambda$ with $\lambda_1 > k$.
The equivalence classes in $\Lambda^{(k)}$ of the elements $m_\lambda$ with
$\lambda_1 \leq k$ form a basis of $\Lambda^{(k)}$. 

The \emph{Hall inner product} of symmetric functions is defined by
$$
 \langle h_\lambda, m_\mu \rangle_\Lambda = 
 \langle s_\lambda, s_\mu \rangle_\Lambda = 
 \delta_{\lambda,\mu}.
$$
Observe that every element of the ideal $I_k$ is orthogonal to every element of
$\Lambda_{(k)}$ with respect to this inner product. Hence, it induces a
pairing $\langle \cdot, \cdot \rangle$ between $\Lambda_{(k)}$ and
$\Lambda^{(k)}$. In particular, $\langle f, g \rangle = \langle f, \widetilde g
\rangle$ for $f \in \Lambda_{(k)}$, $g \in \Lambda^{(k)}$ and any preimage
$\widetilde g$ of $g$ under the quotient map $\Lambda \to \Lambda^{(k)}$.
For an element $f$ in $\Lambda^{(k)}$, write $f^\perp : \Lambda_{(k)} \to
\Lambda_{(k)}$ for the linear operator that is adjoint to multiplication by $f$
with respect to $\Langle \cdot, \cdot \Rangle$.

\subsection{Affine Schur functions}
The affine Schur functions form a distinguished basis of $\Lambda^{(k)}$.
For $w \in W$, the \emph{affine Stanley symmetric function} is defined as
\begin{gather}\label{eq:affstanley}
    \dualkschur_w = \sum_{\lambda \in \kBounded}
    \big\langle \hh_\lambda, \uu_w \big\rangle_\AA \, m_\lambda,
\end{gather}
where $m_\lambda$ is the monomial symmetric function indexed by $\lambda$.
These functions are elements of $\Lambda^{(k)}$, but they are not linearly
independent. For a $k$-bounded partition $\lambda$, let $\dualkschur_\lambda =
\dualkschur_{w_\lambda}$, where $w_\lambda$ denotes the $0$-Grassmannian
element corresponding to $\lambda$. The functions $\dualkschur_\lambda$ are
called \emph{affine Schur functions} (or \emph{dual $k$-Schur functions}) and
they form a basis of $\Lambda^{(k)}$. See for instance \cite{Lam06, LM08}.

\subsection{$k$-Schur functions}
\label{ss:kschurs}
The $k$-Schur functions are a distinguished basis of $\Lambda_{(k)}$.
They are defined as the duals of the affine Schur functions with respect
to the inner product $\langle \cdot, \cdot \rangle$ on $\Lambda_{(k)} \times
\Lambda^{(k)}$. That is, they satisfy
$\langle \kschur_\lambda, \dualkschur_\mu \rangle = \delta_{\lambda,\mu}$
for all $k$-bounded partitions $\lambda$ and $\mu$.
Equivalently, they are uniquely defined by the \emph{$k$-Pieri rule}:
\begin{align*}
    h_i \kschur_\lambda = \sum \kschur_\nu
\end{align*}
where the sum ranges over all $k$-bounded partitions $\nu$ such that
$w_\nu w_\lambda^{-1}$ is cyclically decreasing of length $i$.
It follows from duality and \eqref{eq:affstanley} that
\begin{align*}
    h_\mu = \sum_{\lambda\in\kBounded}
    \big\langle \hh_\lambda, \uu_{w_\mu} \big\rangle_\AA
    \, \kschur_\lambda.
\end{align*}

\subsection{Noncommutative $k$-Schur functions}
\label{ss:noncommkschurs}
The algebras $\Lambda_{(k)}$ and $\BB$ are isomorphic with
isomorphism given by $h_\lambda \mapsto \hh_\lambda$.
We denote by $\nckschur_\lambda$ the image of the
$k$-Schur function $\kschur_\lambda$ under this isomorphism.
In the literature, $\nckschur_\lambda$ is called
a \emph{noncommutative $k$-Schur function}.
They have the following expansion \cite[Proposition~42]{Lam06}:
\begin{gather}
    \label{eq:kschurexpansioninA}
    \nckschur_\lambda = \sum_{w \in W}
    \Langle \kschur_\lambda, \dualkschur_w \Rangle
    \, \uu_w.
\end{gather}
That is, the coefficient of $\uu_w$ in $\nckschur_\lambda$ is equal
to the coefficient of $\dualkschur_\lambda$ in $\dualkschur_w$:
\begin{gather}
    \label{eq:relationbetweenpairings}
    \Langle \kschur_\lambda, \dualkschur_w \Rangle
    =
    \Langle \nckschur_\lambda, \uu_w \Rangle_\AA
\end{gather}
and so
\begin{gather*}
    \dualkschur_w = \sum_{\lambda\in\kBounded}
    \Langle \nckschur_\lambda, \uu_w \Rangle_\AA \dualkschur_\lambda.
\end{gather*}
Consequently, $\nckschur_\lambda$ contains exactly one term $\uu_w$ with $w \in
W^0$ and its coefficient is $1$. Furthermore, if $\sum_{w} c_w \uu_w$ is known
to lie in $\BB$, then
$\sum_{w} c_w \uu_w = \sum_{\lambda} c_{w_\lambda} \nckschur_{\lambda}$.

\section{Definition of the operators}

In this section, we define operators on the affine nilCoxeter algebra $\AA$. The definitions are dependent upon the combinatorics introduced by Lam, Lapointe, Morse, and Shimozono in \cite{LLMS10}.

\subsection{Up operators}

Define an edge-labelled oriented graph $\upgraph$ with vertex set $W$:
there is an edge from $v$ to $w$ labelled by $i$ whenever
$\ell(w) = \ell(v) + 1$
and
$s_iv = w$.
(See Figure \ref{fig:upgraph}.)
So, $\upgraph$ is the graph for the \emph{(left) weak order} on $W$.

\begin{figure}[htb]
\begin{center}
\begin{tikzpicture}[scale=0.60,>=latex,line join=bevel,]
  \node (u0*u1*u2) at (74.359bp,212bp) [draw,draw=none] {$s_{0}s_{1}s_{2}$};
  \node (u2*u1*u0) at (368.36bp,212bp) [draw,draw=none] {$s_{2}s_{1}s_{0}$};
  \node (u2*u1) at (418.36bp,144bp) [draw,draw=none] {$s_{2}s_{1}$};
  \node (u1*u2*u1*u0) at (238.36bp,280bp) [draw,draw=none] {$s_{1}s_{2}s_{1}s_{0}$};
  \node (u1*u2) at (104.36bp,144bp) [draw,draw=none] {$s_{1}s_{2}$};
  \node (u1*u0) at (327.36bp,144bp) [draw,draw=none] {$s_{1}s_{0}$};
  \node (u2) at (118.36bp,76bp) [draw,draw=none] {$s_{2}$};
  \node (u1) at (386.36bp,76bp) [draw,draw=none] {$s_{1}$};
  \node (u0*u1*u0) at (484.36bp,212bp) [draw,draw=none] {$s_{0}s_{1}s_{0}$};
  \node (u1*u2*u0) at (156.36bp,212bp) [draw,draw=none] {$s_{1}s_{2}s_{0}$};
  \node (u0) at (272.36bp,76bp) [draw,draw=none] {$s_{0}$};
  \node (1) at (272.36bp,7bp) [draw,draw=none] {$1$};
  \node (u0*u1*u2*u0) at (87.359bp,280bp) [draw,draw=none] {$s_{0}s_{1}s_{2}s_{0}$};
  \node (u2*u0) at (197.36bp,144bp) [draw,draw=none] {$s_{2}s_{0}$};
  \node (u0*u1) at (464.36bp,144bp) [draw,draw=none] {$s_{0}s_{1}$};
  \node (u0*u2) at (38.359bp,144bp) [draw,draw=none] {$s_{0}s_{2}$};
  \node (u0*u2*u1*u0) at (470.36bp,280bp) [draw,draw=none] {$s_{0}s_{2}s_{1}s_{0}$};
  \node (u1*u2*u1) at (238.36bp,212bp) [draw,draw=none] {$s_{1}s_{2}s_{1}$};
  \node (u0*u2*u1) at (571.36bp,212bp) [draw,draw=none] {$s_{0}s_{2}s_{1}$};
  \node (u0*u2*u0) at (19.359bp,212bp) [draw,draw=none] {$s_{0}s_{2}s_{0}$};
  \definecolor{strokecol}{rgb}{0.0,0.0,0.0};
  \pgfsetstrokecolor{strokecol}
  \draw [->] (u2*u1*u0) -- (u0*u2*u1*u0);
  \draw [->] (u1*u2*u0) -- (u0*u1*u2*u0);
  \draw [->] (u1*u2*u0) -- (u1*u2*u1*u0);
  \draw [->] (u2*u1*u0) -- (u1*u2*u1*u0);
  \draw [->] (u2*u0) -- (u1*u2*u0);
  \draw [->] (u1*u0) -- (u2*u1*u0);
  \draw [->] (u0) -- (u1*u0);
  \draw [->] (u0) -- (u2*u0);
  \draw [->] (1) -- (u0);
  \draw [->] (u1*u0) -- (u0*u1*u0);
  \draw [->] (u0*u1) -- (u0*u1*u0);
  \draw [->] (u1) -- (u0*u1);
  \draw [->] (u2*u0) -- (u0*u2*u0);
  \draw [->] (u0*u2) -- (u0*u2*u0);
  \draw [->] (u2) -- (u0*u2);
  \draw [->] (1) -- (u2);
  \draw [->] (1) -- (u1);
  \draw [->] (u1) -- (u2*u1);
  \draw [->] (u2) -- (u1*u2);
  \draw [->] (u2*u1) -- (u1*u2*u1);
  \draw [->] (u1*u2) -- (u1*u2*u1);
  \draw [->] (u2*u1) -- (u0*u2*u1);
  \draw [->] (u1*u2) -- (u0*u1*u2);
\end{tikzpicture}
\end{center}
\caption{A subgraph of $\upgraph$ for $k=2$; \textit{cf.} Figure \ref{fig:downgraph}.}
\label{fig:upgraph}
\end{figure}

A \emph{weak strip} of length $j$ from $w$ to $v$, denoted by $w
\weakstrip v$, is a pair of elements $w, v \in W$ such that $w$ precedes
$v$ in weak order and $vw^{-1}$ is a cyclically decreasing word of
length $j$.

For any non-negative integer $j$, define a linear operator $U_j : \AA
\to \AA$ by
\begin{gather*}
 U_j(\uu_w)
 = \sum_{ w \weakstrip v \atop \size(w \weakstrip v) = j } \uu_{v}
 = \hh_j \uu_w
\end{gather*}
where the sum ranges over all weak strips of length $j$ that begin at
$w$. Equivalently, $U_j$ is multiplication on the left by $\hh_j$.
\begin{Example} 
    With $k=2$:
    $U_{1}(\uu_0) = \uu_2\uu_0 + \uu_1\uu_0$
    and
    $U_{2}(\uu_0) = \uu_0\uu_2\uu_0 + \uu_2\uu_1\uu_0.$
\end{Example}

\subsection{Down operators}

Define a second edge-labelled oriented graph $\downgraph$, the
\emph{marked strong order} graph, with vertex set $W$: there is an edge
from $x$ to $y$ labelled by $y(j)=x(i)$ whenever $\ell(x) = \ell(y)+1$ and
there exists $i \leq 0 < j$ such that $y \, t_{i,j} = x$.

\begin{Example} $(k=2)$
There are two edges from $x = s_0s_1s_2s_0$ to $y = s_1s_2s_0$ since $y^{-1} x$
can be written as $t_{i,j}$ with $i \leq 0 < j$ in two ways: $y^{-1} x =
t_{-4,1} = t_{-1,4}$. These edges are labelled by $y(1) = -2$ and $y(4) = 1$.
See Figure \ref{fig:downgraph}.
\end{Example}

\begin{Remark}
\cite{LLMS10} defined a similar graph except that they oriented their edges in
the opposite direction and labelled the edges by the pair $(i,j)$:
they write $y \buildrel{i,j}\over{\longrightarrow} x$ whereas we write $x
\buildrel{y(j)}\over{\longrightarrow} y$; and they call our label $y(j)$ the
\emph{marking} of the edge.
\end{Remark}

\begin{figure}[htb]
\begin{center}
\begin{tikzpicture}[scale=0.60,>=latex,line join=bevel,]
  \node (u0*u1*u2) at (74.359bp,212bp) [draw,draw=none] {$s_{0}s_{1}s_{2}$};
  \node (u2*u1*u0) at (368.36bp,212bp) [draw,draw=none] {$s_{2}s_{1}s_{0}$};
  \node (u2*u1) at (418.36bp,144bp) [draw,draw=none] {$s_{2}s_{1}$};
  \node (u1*u2*u1*u0) at (238.36bp,280bp) [draw,draw=none] {$s_{1}s_{2}s_{1}s_{0}$};
  \node (u1*u2) at (104.36bp,144bp) [draw,draw=none] {$s_{1}s_{2}$};
  \node (u1*u0) at (327.36bp,144bp) [draw,draw=none] {$s_{1}s_{0}$};
  \node (u2) at (118.36bp,76bp) [draw,draw=none] {$s_{2}$};
  \node (u1) at (386.36bp,76bp) [draw,draw=none] {$s_{1}$};
  \node (u0*u1*u0) at (484.36bp,212bp) [draw,draw=none] {$s_{0}s_{1}s_{0}$};
  \node (u1*u2*u0) at (156.36bp,212bp) [draw,draw=none] {$s_{1}s_{2}s_{0}$};
  \node (u0) at (272.36bp,76bp) [draw,draw=none] {$s_{0}$};
  \node (1) at (272.36bp,7bp) [draw,draw=none] {$1$};
  \node (u0*u1*u2*u0) at (87.359bp,280bp) [draw,draw=none] {$s_{0}s_{1}s_{2}s_{0}$};
  \node (u2*u0) at (197.36bp,144bp) [draw,draw=none] {$s_{2}s_{0}$};
  \node (u0*u1) at (464.36bp,144bp) [draw,draw=none] {$s_{0}s_{1}$};
  \node (u0*u2) at (38.359bp,144bp) [draw,draw=none] {$s_{0}s_{2}$};
  \node (u0*u2*u1*u0) at (470.36bp,280bp) [draw,draw=none] {$s_{0}s_{2}s_{1}s_{0}$};
  \node (u1*u2*u1) at (238.36bp,212bp) [draw,draw=none] {$s_{1}s_{2}s_{1}$};
  \node (u0*u2*u1) at (571.36bp,212bp) [draw,draw=none] {$s_{0}s_{2}s_{1}$};
  \node (u0*u2*u0) at (19.359bp,212bp) [draw,draw=none] {$s_{0}s_{2}s_{0}$};
  \draw [black,->] (u2*u0) ..controls (177.61bp,127bp) and (150.43bp,103.6bp)  .. (u2);
  \definecolor{strokecol}{rgb}{0.0,0.0,0.0};
  \pgfsetstrokecolor{strokecol}
  \tikzstyle{every node}=[font=\footnotesize]
  \draw (175.36bp,110bp) node {$1$};
  \draw [black,->] (u1*u2*u0) ..controls (186.42bp,201.68bp) and (202.25bp,195.37bp)  .. (215.36bp,188bp) .. controls (228.38bp,180.68bp) and (228.95bp,174.58bp)  .. (242.36bp,168bp) .. controls (262.19bp,158.26bp) and (286.63bp,151.88bp)  .. (u1*u0);
  \draw (251.36bp,178bp) node {$0$};
  \draw [black,->] (u0*u1*u2*u0) ..controls (75.947bp,269.17bp) and (70.733bp,262.83bp)  .. (68.359bp,256bp) .. controls (65.223bp,246.99bp) and (66.522bp,236.41bp)  .. (u0*u1*u2);
  \draw (77.359bp,246bp) node {$2$};
  \draw [black,->] (u1*u0) ..controls (342.16bp,126.94bp) and (361.5bp,104.65bp)  .. (u1);
  \draw (372.36bp,110bp) node {$2$};
  \draw [black,->] (u0*u2*u1) ..controls (549.49bp,196.54bp) and (522.98bp,178.91bp)  .. (498.36bp,168bp) .. controls (488.13bp,163.47bp) and (461.18bp,155.68bp)  .. (u2*u1);
  \draw (543.36bp,178bp) node {$1$};
  \draw [black,->] (u2*u1*u0) ..controls (342.34bp,202.2bp) and (333.24bp,196.4bp)  .. (328.36bp,188bp) .. controls (323.55bp,179.73bp) and (323.21bp,169.03bp)  .. (u1*u0);
  \draw (337.36bp,178bp) node {$0$};
  \draw [black,->] (u2*u1*u0) ..controls (361.97bp,197.09bp) and (354.52bp,180.83bp)  .. (346.36bp,168bp) .. controls (344.18bp,164.57bp) and (341.6bp,161.05bp)  .. (u1*u0);
  \draw (366.36bp,178bp) node {$3$};
  \draw [black,->] (u1*u2*u0) ..controls (165.75bp,201.24bp) and (171.25bp,194.46bp)  .. (175.36bp,188bp) .. controls (181.16bp,178.88bp) and (186.6bp,168.03bp)  .. (u2*u0);
  \draw (198.86bp,178bp) node {$-1$};
  \draw [black,->] (u1*u2*u0) ..controls (150.66bp,196.79bp) and (146.32bp,179.85bp)  .. (153.36bp,168bp) .. controls (158.03bp,160.13bp) and (166.3bp,154.67bp)  .. (u2*u0);
  \draw (162.36bp,178bp) node {$2$};
  \draw [black,->] (u0*u1) ..controls (444.86bp,127bp) and (418.02bp,103.6bp)  .. (u1);
  \draw (443.36bp,110bp) node {$1$};
  \draw [black,->] (u0*u2*u0) ..controls (24.011bp,195.35bp) and (29.888bp,174.32bp)  .. (u0*u2);
  \draw (39.359bp,178bp) node {$0$};
  \draw [black,->] (u1*u2*u1*u0) ..controls (252.78bp,262.89bp) and (271.74bp,241.14bp)  .. (280.36bp,236bp) .. controls (298.67bp,225.09bp) and (322.05bp,219.02bp)  .. (u2*u1*u0);
  \draw (292.86bp,246bp) node {$-1$};
  \draw [black,->] (u1*u0) ..controls (318bp,132.89bp) and (312.29bp,126.06bp)  .. (307.36bp,120bp) .. controls (299.22bp,110bp) and (290.16bp,98.604bp)  .. (u0);
  \draw (316.36bp,110bp) node {$2$};
  \draw [black,->] (u0*u1*u2*u0) ..controls (179.19bp,274.39bp) and (374.92bp,261.94bp)  .. (387.36bp,256bp) .. controls (398.42bp,250.72bp) and (395.94bp,242.46bp)  .. (406.36bp,236bp) .. controls (421.53bp,226.6bp) and (440.65bp,220.64bp)  .. (u0*u1*u0);
  \draw (415.36bp,246bp) node {$1$};
  \draw [black,->] (u0*u1*u2*u0) ..controls (56.765bp,270.21bp) and (45.283bp,264.41bp)  .. (37.359bp,256bp) .. controls (30.034bp,248.23bp) and (25.515bp,237.07bp)  .. (u0*u2*u0);
  \draw (49.859bp,246bp) node {$-1$};
  \draw [black,->] (u0*u1*u2*u0) ..controls (42.039bp,272.73bp) and (10.711bp,265.75bp)  .. (3.3585bp,256bp) .. controls (-3.2495bp,247.24bp) and (1.5614bp,235.48bp)  .. (u0*u2*u0);
  \draw (12.359bp,246bp) node {$2$};
  \draw [black,->] (u0) ..controls (272.36bp,59.313bp) and (272.36bp,38.603bp)  .. (1);
  \draw (281.36bp,42bp) node {$1$};
  \draw [black,->] (u0*u2) ..controls (58.353bp,127bp) and (85.886bp,103.6bp)  .. (u2);
  \draw (96.359bp,110bp) node {$1$};
  \draw [black,->] (u0*u1*u2) ..controls (71.304bp,197.1bp) and (69.363bp,180.38bp)  .. (75.359bp,168bp) .. controls (77.5bp,163.58bp) and (80.883bp,159.69bp)  .. (u1*u2);
  \draw (84.359bp,178bp) node {$1$};
  \draw [black,->] (u1*u2*u0) ..controls (141.02bp,201.44bp) and (133.03bp,195.01bp)  .. (127.36bp,188bp) .. controls (120.42bp,179.43bp) and (114.67bp,168.43bp)  .. (u1*u2);
  \draw (136.36bp,178bp) node {$2$};
  \draw [black,->] (u2*u0) ..controls (216.1bp,127bp) and (241.92bp,103.6bp)  .. (u0);
  \draw (252.36bp,110bp) node {$0$};
  \draw [black,->] (u0*u1*u2*u0) ..controls (91.912bp,264.62bp) and (98.414bp,247.04bp)  .. (109.36bp,236bp) .. controls (114.86bp,230.45bp) and (121.91bp,225.94bp)  .. (u1*u2*u0);
  \draw (121.86bp,246bp) node {$-2$};
  \draw [black,->] (u0*u1*u2*u0) ..controls (115.08bp,269.95bp) and (126.36bp,264.07bp)  .. (134.36bp,256bp) .. controls (142.22bp,248.07bp) and (147.8bp,236.75bp)  .. (u1*u2*u0);
  \draw (156.36bp,246bp) node {$1$};
  \draw [black,->] (u0*u2*u1*u0) ..controls (506.45bp,270.58bp) and (522.15bp,264.63bp)  .. (534.36bp,256bp) .. controls (545.48bp,248.14bp) and (555.27bp,236.15bp)  .. (u0*u2*u1);
  \draw (563.36bp,246bp) node {$4$};
  \draw [black,->] (u0*u1*u0) ..controls (479.46bp,195.35bp) and (473.28bp,174.32bp)  .. (u0*u1);
  \draw (485.36bp,178bp) node {$2$};
  \draw [black,->] (u2*u1*u0) ..controls (380.83bp,195.04bp) and (396.98bp,173.07bp)  .. (u2*u1);
  \draw (407.36bp,178bp) node {$3$};
  \draw [black,->] (u1*u2*u1*u0) ..controls (238.36bp,263.45bp) and (238.36bp,242.73bp)  .. (u1*u2*u1);
  \draw (247.36bp,246bp) node {$3$};
  \draw [black,->] (u0*u2*u1*u0) ..controls (420.62bp,275.1bp) and (376.94bp,268.81bp)  .. (341.36bp,256bp) .. controls (324.14bp,249.8bp) and (322.91bp,241.21bp)  .. (305.36bp,236bp) .. controls (195.16bp,203.31bp) and (161.17bp,234.08bp)  .. (47.359bp,218bp) .. controls (47.257bp,217.99bp) and (47.155bp,217.97bp)  .. (u0*u2*u0);
  \draw (350.36bp,246bp) node {$4$};
  \draw [black,->] (u2*u1*u0) ..controls (336.51bp,201.85bp) and (317.49bp,195.2bp)  .. (301.36bp,188bp) .. controls (284.29bp,180.38bp) and (281.43bp,175.62bp)  .. (264.36bp,168bp) .. controls (249.94bp,161.56bp) and (233.21bp,155.57bp)  .. (u2*u0);
  \draw (310.36bp,178bp) node {$3$};
  \draw [black,->] (u0*u2*u1*u0) ..controls (458.79bp,264.46bp) and (444.28bp,246.77bp)  .. (428.36bp,236bp) .. controls (418.57bp,229.38bp) and (406.68bp,224.14bp)  .. (u2*u1*u0);
  \draw (459.36bp,246bp) node {$1$};
  \draw [black,->] (u0*u2*u1*u0) ..controls (420.37bp,274.59bp) and (378.99bp,267.91bp)  .. (369.36bp,256bp) .. controls (363.14bp,248.31bp) and (362.95bp,237.16bp)  .. (u2*u1*u0);
  \draw (378.36bp,246bp) node {$4$};
  \draw [black,->] (u1*u2*u1*u0) ..controls (217.86bp,263bp) and (189.64bp,239.6bp)  .. (u1*u2*u0);
  \draw (215.36bp,246bp) node {$3$};
  \draw [black,->] (u0*u2*u1*u0) ..controls (496.78bp,270.06bp) and (505.65bp,264.32bp)  .. (510.36bp,256bp) .. controls (514.74bp,248.26bp) and (513.92bp,244.14bp)  .. (510.36bp,236bp) .. controls (508.5bp,231.76bp) and (505.48bp,227.93bp)  .. (u0*u1*u0);
  \draw (521.36bp,246bp) node {$1$};
  \draw [black,->] (u0*u2*u1*u0) ..controls (473.77bp,263.45bp) and (478.03bp,242.73bp)  .. (u0*u1*u0);
  \draw (487.36bp,246bp) node {$4$};
\end{tikzpicture}
\end{center}
\caption{$\downgraph$ for $k=2$ truncated at the affine Grassmannian elements of
length $4$.}
\label{fig:downgraph}
\end{figure}

A \emph{strong strip} of length $i$ from $w$ to $v$, denoted by $w
\strongstrip v$, is a path 
\begin{align*}
            w
            \stackrel{\ell_1}\longrightarrow w_1
            \stackrel{\ell_2}\longrightarrow
            \cdots
            \stackrel{\ell_i}\longrightarrow w_i = v
\end{align*}
of length $i$ in $\downgraph$ with decreasing edge labels:
            $\ell_1 > \cdots > \ell_i$.
For non-negative integers $i$, define $D_i: \AA \to \AA$ as
\[
 D_i(\uu_w)
 = \sum_{
            w \strongstrip v
            \atop
            \size(w \strongstrip v) = i
        } \uu_{v}
\]
where the sum ranges over all strong strips of length $i$ that begin at
$w$. In particular, the coefficient of $\uu_{v}$ in $D_i(\uu_w)$ is the
number of strong strips of length $i$ that begin at $w$ and end at $v$.

\begin{Example}
With $k=2$, using the graph from Figure \ref{fig:downgraph}, one can verify that:
\begin{align*}
    D_1(\uu_0\uu_1\uu_2\uu_0) 
    &= 2\uu_0\uu_2\uu_0 + \uu_0\uu_1\uu_2 + 2\uu_1\uu_2\uu_0 + \uu_0\uu_1\uu_0;
    \\
    D_2(\uu_0\uu_1\uu_2\uu_0) 
    &= \uu_0\uu_2 + \uu_1\uu_2 + \uu_2\uu_0 + \uu_1\uu_0.
\end{align*}
\end{Example}

More generally, we define an operator $D_J$ for any composition $J$ of positive
integers; the operator $D_i$ defined above is $D_J$ for the composition $J=[i]$.
We need some additional notation. 
The \emph{ascent composition} of a sequence $\ell_1, \ell_2, \dots, \ell_m$ is
the composition
$[
i_1,
i_2 - i_{1},
\ldots,
i_j - i_{j-1},
m - i_j
]$,
where $i_1 < i_2 < \dots < i_j$ are the ascents of the sequence; that is, the elements in $\{1, \dots, m-1\}$ such
that $\ell_{i_a} < \ell_{i_a+1}$. For example, the ascent composition of the sequence $3,2,0,3,4,1$ is $[3,1,2]$ since the ascents are in positions $3$ and $4$.

If
$ w_0 \stackrel{\ell_1}\longrightarrow \cdots \stackrel{\ell_m}\longrightarrow w_m $
is a path in $\downgraph$, then we let 
$\ascomp(w_0 \stackrel{\ell_1}\longrightarrow \cdots \stackrel{\ell_m}\longrightarrow w_m)$
denote the ascent composition of the sequence of labels $\ell_1, \dots, \ell_m$.
It is a composition of the length of the path.

For a composition $J = [j_1, j_2, \dots, j_l]$ of positive integers, define
\[
 D_J(\uu_w)
 = \sum_{
         \ascomp\left(
            w 
            \stackrel{\ell_1}\longrightarrow w_1
            \stackrel{\ell_2}\longrightarrow
            \cdots
            \stackrel{\ell_m}\longrightarrow w_m
        \right)
        = J
        } \uu_{w_m}
\]
where the sum ranges over all paths in $\downgraph$ of length $m = j_1 + \dots
+ j_l$ beginning at $w$ whose sequence of labels has ascent composition $J$.

\begin{Example}
With $k=2$ one can verify using Figure \ref{fig:downgraph} that:
    \begin{align*}
    D_{[3]}(\uu_1\uu_2\uu_1\uu_0) &= \uu_2 + \uu_0, \\
    D_{[2,1]}(\uu_1\uu_2\uu_1\uu_0) & = \uu_2 + 2\uu_0 + \uu_1, \\
    D_{[1,2]}(\uu_1\uu_2\uu_1\uu_0) & = \uu_2 + 2\uu_0 + \uu_1, \\
    D_{[1,1,1]}(\uu_1\uu_2\uu_1\uu_0) & = \uu_0 + \uu_1.
    \end{align*}
\end{Example}

For two compositions $I = [i_1, \dots, i_r]$ and $J = [j_1, \dots, j_s]$,
let
\begin{align*}
I \boxplus J &= [i_1, \dots, i_{r-1}, i_r+j_1, , j_2, \dots, j_s] \\
I \boxdot J &= [i_1, \dots, i_r, j_1, \dots, j_s]
\end{align*}

\begin{Proposition}
    If $I$ and $J$ are compositions, then
    \begin{gather*}
        D_I \circ D_J = D_{I \boxplus J} + D_{I \boxdot J}.
    \end{gather*}
\end{Proposition}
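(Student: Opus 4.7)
The plan is to use a direct bijective argument on paths in $\downgraph$. Both sides are linear operators on $\AA$, so I will verify the identity by evaluating at an arbitrary basis element $\uu_w$ and comparing the coefficient of each $\uu_u$ on the two sides.

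First, I will unwind the definitions of $D_I$ and $D_J$: the coefficient of $\uu_u$ in $(D_I \circ D_J)(\uu_w)$ counts pairs of concatenable paths in $\downgraph$ of total length $|I|+|J|$ whose two segments have ascent compositions $I$ and $J$ respectively. This is the same as counting paths $P : w = w_0 \to w_1 \to \cdots \to w_{|I|+|J|} = u$ equipped with a distinguished cut at position $|I|$ such that the initial and terminal subpaths have ascent compositions $I$ and $J$.

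The central observation to prove is that, for such a path with label sequence $\ell_1, \ldots, \ell_{|I|+|J|}$, the ascent composition of the entire label sequence is determined by the behavior at the junction. If $\ell_{|I|} < \ell_{|I|+1}$ (an ascent at the cut), then the two ascent compositions stay separate and the full composition is $I \boxdot J$; otherwise, the final decreasing run of the initial segment (of length $i_r$) merges with the initial decreasing run of the terminal segment (of length $j_1$) into a single run of length $i_r + j_1$, yielding ascent composition $I \boxplus J$. Writing $I = [i_1, \ldots, i_r]$ and $J = [j_1, \ldots, j_s]$, this matches precisely the definitions of $I \boxdot J$ and $I \boxplus J$.

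Finally, I will check the converse: any path of length $|I|+|J|$ with ascent composition $I \boxdot J$ has an ascent exactly at position $|I| = i_1 + \cdots + i_r$, and cutting at that position recovers a unique $(I, J)$-factorization; any path with ascent composition $I \boxplus J$ has its merged $r$-th run of length $i_r + j_1$ straddling position $|I|$, and cutting there again produces a unique factorization. Since the two junction cases are mutually exclusive and jointly exhaustive, summing contributions over all paths from $w$ to $u$ will deliver the identity coefficient by coefficient. I anticipate no serious obstacle; the subtlest point is the bookkeeping of the merged-run arithmetic, namely confirming that in the non-ascent case the new composition is obtained from $[i_1,\ldots,i_r]$ and $[j_1,\ldots,j_s]$ by collapsing the adjacent parts $i_r$ and $j_1$ into $i_r + j_1$, exactly as prescribed by $\boxplus$.
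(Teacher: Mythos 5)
Your argument is correct and is essentially the paper's own proof: the paper likewise identifies the coefficient of $\uu_u$ in $(D_I \circ D_J)(\uu_w)$ with concatenations of two paths in $\downgraph$ and notes that the ascent composition of the concatenation is $I \boxdot J$ or $I \boxplus J$ according to whether there is an ascent at the junction, while you additionally make explicit the uniqueness of the cut, which the paper leaves implicit. The only caveat, shared with the paper's own one-line proof, is the order convention: under the usual reading $(D_I \circ D_J)(\uu_w) = D_I\bigl(D_J(\uu_w)\bigr)$ the initial segment carries ascent composition $J$ rather than $I$, so the roles of $I$ and $J$ in your unwinding are swapped relative to that convention --- a harmless relabeling that does not affect the validity of the case analysis.
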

\begin{proof}
    If $w \to \dots \to v$ and $v \to \dots \to u$ are two paths in
    $\downgraph$ with ascent compositions $J$ and $I$, respectively,
    then the path $w \to \dots \to v \to \dots \to u$ has
    ascent composition either $I \boxdot J$ or $I \boxplus J$.
\end{proof}

\begin{Corollary}
    Suppose $I = [i_1, \dots, i_r]$ is a composition. Then
    \begin{align*}
        \sum_{J \preceq I} D_J = D_{i_1} \circ \cdots \circ D_{i_r}
    \end{align*}
    where $\preceq$ denotes reverse refinement order on
    compositions\footnote{With respect to this order, the composition
    $[1,1,\dots,1]$ is the maximal element and the composition $[n]$ is
    the minimal element.}.
\end{Corollary}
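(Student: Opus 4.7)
The plan is to proceed by induction on $r = \ell(I)$, using the preceding proposition as the single inductive tool.

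The base case $r=1$ is trivial: the only $J$ with $J \preceq [i_1]$ is $[i_1]$ itself, and the right-hand side is $D_{i_1} = D_{[i_1]}$. For the inductive step, assume
\[
D_{i_2} \circ \cdots \circ D_{i_r} = \sum_{J' \preceq [i_2, \dots, i_r]} D_{J'}.
\]
Composing on the left with $D_{i_1}$ and applying the proposition termwise gives
\[
D_{i_1} \circ D_{i_2} \circ \cdots \circ D_{i_r}
= \sum_{J' \preceq [i_2, \dots, i_r]} \bigl( D_{[i_1] \boxplus J'} + D_{[i_1] \boxdot J'} \bigr).
\]

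The remaining step is a purely combinatorial bookkeeping claim: as $J'$ ranges over compositions coarser than $[i_2, \dots, i_r]$, the family
\[
\bigl\{[i_1] \boxplus J' \bigr\} \;\sqcup\; \bigl\{[i_1] \boxdot J'\bigr\}
\]
coincides, with multiplicities, with the set of compositions $J$ satisfying $J \preceq [i_1, i_2, \dots, i_r]$. To see this, recall that a coarsening of $[i_1, \dots, i_r]$ is specified by a subset $S \subseteq \{1, \dots, r-1\}$ of positions at which adjacent parts are merged. Partition such coarsenings according to whether $1 \in S$ (i.e.\ whether the first comma of $I$ is collapsed): the case $1 \notin S$ corresponds to $J = [i_1] \boxdot J'$ with $J'$ a coarsening of $[i_2, \dots, i_r]$, and the case $1 \in S$ corresponds to $J = [i_1] \boxplus J'$ with $J'$ a coarsening of $[i_2, \dots, i_r]$. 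Both correspondences are bijective, which establishes the claim and completes the induction.

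There is no real obstacle here; the mild subtlety is just orienting oneself correctly with the convention that $[n]$ is the minimal element of $\preceq$, so that ``$J \preceq I$'' means $J$ is obtained from $I$ by merging adjacent parts, which is precisely the operation encoded by $\boxplus$ at the splice between $D_{i_1}$ and the previously-composed factors.
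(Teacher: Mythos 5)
Your proof is correct and follows essentially the same route as the paper: induction on the length of $I$, applying the Proposition with $I=[i_1]$ to each term $D_{J'}$ with $J' \preceq [i_2,\dots,i_r]$, and then identifying the coarsenings of $I$ according to whether or not the first two blocks are merged. Your explicit bijection via merge-sets just spells out in more detail the paper's closing observation that the first part of any $J \preceq I$ is either $i_1$ or $i_1 + (i_2 + \cdots + i_l)$ for some $l \geq 2$.
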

\begin{proof}
    Proceed by induction on $r$. This is trivially true for $r=1$.
    Suppose the result holds for compositions of length less than $r$.
    Then
    \begin{align*}
        D_{i_1} \circ \cdots \circ D_{i_r}
        =
        D_{i_1} \left( 
            \sum_{J' \preceq [i_2, \dots, i_r]} D_{J'}
            \right)
        =
        \sum_{J' \preceq [i_2, \dots, i_r]}
            \left(
                D_{[i_1, j_1, \dots, j_s]} + D_{[i_1 + j_1, \dots, j_s]}
            \right)
    \end{align*}
    which is $\sum_{J \preceq I} D_J$ since the first part of a
    composition $J$ that satisfies $J \preceq I$ is either 
    $i_1$ or $i_1 + (i_2 + \dots + i_l)$ for some $l \geq 2$.
\end{proof}

\section{Properites of the operators} \label{sec:operators}

In this section we develop properties of the operators $U_j$ and $D_i$.

\subsection{Extensions of linear endomorphisms of $\BB$ to $\AA$}
\label{ss:extensions}
Since $W^0$ is a set of coset representatives of $W_0$ in $W$, every element
$w$ of $W$ factors uniquely as $w = w^{(0)} w_{(0)}$ with $w^{(0)} \in W^0$ and
$w_{(0)} \in W_0$. We call this the \emph{$0$-Grassmannian factorization} of
$w$. Since the elements of $W^0$ are in bijection with $k$-bounded partitions,
we can write this factorization as $w = w_\lambda w_{(0)}$, and we let
\begin{gather*}
    \bb_w = \nckschur_\lambda \uu_{w_{(0)}}.
\end{gather*}

\begin{Proposition}
    \label{prop:newAbasis}
    The set $\{\bb_w : w \in W\}$ is a basis of $\AA$.
\end{Proposition}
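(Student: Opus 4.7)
The plan is to show that $\{\bb_w\}$ is related to $\{\uu_w\}$ by a unitriangular change of basis, with respect to a refinement of the length grading on $\AA$ by $\ell(w^{(0)})$. Since the map $\uu_w \mapsto \bb_w$ preserves the grading by $\ell(w)$ (each $\nckschur_\lambda$ is homogeneous of degree $|\lambda|$ because $\hh_i$ is homogeneous of degree $i$, and $\ell(w)=|\lambda|+\ell(w_{(0)})$), it suffices to work in each finite-dimensional graded piece.

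First I would expand $\nckschur_\lambda$ in the $\uu$-basis. By the paper's observation that $\nckschur_\lambda$ contains exactly one $0$-Grassmannian term $\uu_{w_\lambda}$ with coefficient $1$, combined with homogeneity, we have
\[
\nckschur_\lambda \;=\; \uu_{w_\lambda} \;+\; \sum_{v} c_v \uu_v,
\]
where the sum ranges over $v \in W$ with $\ell(v)=|\lambda|$ and $v \notin W^0$, i.e.\ $v_{(0)} \neq 1$.

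Next I would multiply on the right by $\uu_{w_{(0)}}$ using the standard nilCoxeter identity $\uu_a \uu_b = \uu_{ab}$ when $\ell(ab) = \ell(a)+\ell(b)$ and $0$ otherwise. The leading term gives $\uu_{w_\lambda}\uu_{w_{(0)}} = \uu_w$ by length-additivity of the $0$-Grassmannian factorization $w = w_\lambda w_{(0)}$. For any other term, factor $v = v_\mu v_{(0)}$ with $v_{(0)} \neq 1$, so $|\mu| = \ell(v) - \ell(v_{(0)}) < |\lambda|$. Then $\uu_v \uu_{w_{(0)}}$ is either $0$ or $\uu_u$ where $u = v_\mu (v_{(0)} w_{(0)})$. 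Since $v_\mu \in W^0$ and $v_{(0)} w_{(0)} \in W_0$, this displays the $0$-Grassmannian factorization of $u$, yielding $\ell(u^{(0)}) = |\mu| < |\lambda| = \ell(w^{(0)})$. Putting these together gives the key expansion
\[
\bb_w \;=\; \uu_w \;+\; \sum_{\substack{u\in W \\ \ell(u)=\ell(w) \\ \ell(u^{(0)}) < \ell(w^{(0)})}} a_u\, \uu_u.
\]

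Finally, within each length-$n$ piece of $\AA$, ordering the basis vectors $\uu_w$ by decreasing $\ell(w^{(0)})$, the above expansion exhibits the transition matrix from $\{\uu_w\}$ to $\{\bb_w\}$ as unitriangular, hence invertible. Taking the union over $n$ yields the conclusion. The only real obstacle is the second step — verifying that when $\uu_v \uu_{w_{(0)}} \neq 0$, the resulting $u$ has the claimed $0$-Grassmannian decomposition — and this amounts to the standard bookkeeping that multiplying an arbitrary element of $W$ on the right by an element of $W_0$ does not change its $W^0$-component unless the product shortens, in which case the nilCoxeter product vanishes.
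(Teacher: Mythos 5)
Your argument is correct and is essentially the paper's own proof: both establish that $\bb_w = \uu_w + (\text{terms } \uu_u \text{ with } \ell(u)=\ell(w) \text{ and } \ell(u^{(0)})<\ell(w^{(0)}))$, using that right multiplication by $\uu_{w_{(0)}}$ either kills a term or preserves its $W^0$-component, and then conclude by unitriangularity of the transition matrix with respect to the order refining length by $\ell(w^{(0)})$. The paper phrases this via a linear extension of a partial order on $W$ rather than your graded-piece/block formulation, but the content is the same.
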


\begin{proof}
    We will define a total order on the elements of $W$ in such a way that the
    leading term of $\bb_w$ is $\uu_w$. Then, with respect to this ordering,
    the transition matrix from $\{\bb_w\}$ to $\{\uu_w\}$ is uni-triangular,
    from which the result follows.

    Informally, we need an order in which $v$ precedes $u$ whenever $\ell(u) >
    \ell(v)$ or the ``Grassmannian part'' of $u$ is bigger than that of $v$.
    Define $v$ to precede $u$ if:
            $\ell(u) > \ell(v)$; or
            $\ell(u) = \ell(v)$ and $\ell(u^{(0)}) > \ell(v^{(0)})$.
    Note that this is only a partial order, but any linear extension of this
    partial order will do the trick.

    First we argue that the leading term of $\bb_{w_\lambda} = \nckschur_\lambda$ is
    $\uu_{w_\lambda}$. Indeed, $\nckschur_\lambda$ expanded in the basis
    $\{\uu_v\}$ is a linear combination of terms $\uu_v$ with the $v$ all of
    the same length $|\lambda|$, and it contains exactly one term $\uu_w$ with
    $w \in W^0$, namely $w_\lambda$ (see \S\ref{ss:noncommkschurs}).

    Next, we prove that the leading term of $\bb_w$ is $\uu_w$.
    If $\uu_v$ appears in $\bb_w = \nckschur_\lambda \uu_{w_{(0)}}$ with nonzero
    coefficient, then 
        $\uu_v = \uu_{\tilde v} \uu_{w_{(0)}}$
    with $\uu_{\tilde v}$ appearing in $\nckschur_\lambda$.
    It follows that
        $v = \tilde vw_{(0)}$ with $\ell(v) = \ell(\tilde v) + \ell(w_{(0)})$
    and that
        $v^{(0)} = {\tilde v}^{(0)}$.
    Hence, to compare terms $\uu_v$ and $\uu_u$ of $\bb_w$, it suffices to
    compare the corresponding terms $\uu_{\tilde v}$ and $\uu_{\tilde u}$ of
    $\nckschur_\lambda$. So, the leading term of $\bb_w$ is the leading term
    of $\nckschur_\lambda$ times $\uu_{w_{(0)}}$, which is precisely $\uu_{w_\lambda} \uu_{w_{(0)}} = \uu_w$.
\end{proof}

\begin{Corollary}
    \label{coro:newAbasis}
    The set $\{\hh_\lambda \uu_{w_{(0)}} : w = w_\lambda w_{(0)} \in
    W\}$ is a basis of $\AA$.
\end{Corollary}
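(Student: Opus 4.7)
The plan is to leverage Proposition \ref{prop:newAbasis} together with the fact that $\{\hh_\mu\}_{\mu \in \kBounded}$ and $\{\nckschur_\lambda\}_{\lambda \in \kBounded}$ are two bases of $\BB$, related by a change of basis that is block-diagonal with respect to the grading by partition size. Both bases are homogeneous, so for each positive integer $n$ the subspace of $\BB$ spanned by $\{\hh_\mu : |\mu|=n\}$ coincides with the subspace spanned by $\{\nckschur_\lambda : |\lambda|=n\}$, and the transition is given by a finite invertible matrix. Thus there exist scalars $b_{\mu,\lambda}$, nonzero only when $|\mu|=|\lambda|$, such that
\[
\hh_\mu = \sum_{\lambda \in \kBounded} b_{\mu,\lambda}\, \nckschur_\lambda,
\]
and the infinite matrix $(b_{\mu,\lambda})$ is invertible (as a block-diagonal matrix with each block finite and invertible).

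Next, for any fixed $w_{(0)} \in W_0$, I would right-multiply both sides by $\uu_{w_{(0)}}$:
\[
\hh_\mu \uu_{w_{(0)}} = \sum_{\lambda \in \kBounded} b_{\mu,\lambda}\, \nckschur_\lambda \uu_{w_{(0)}} = \sum_{\lambda \in \kBounded} b_{\mu,\lambda}\, \bb_{w_\lambda w_{(0)}}.
\]
Hence, for each fixed $w_{(0)}$, the family $\{\hh_\mu \uu_{w_{(0)}} : \mu \in \kBounded\}$ is obtained from the family $\{\bb_{w_\lambda w_{(0)}} : \lambda \in \kBounded\}$ by applying the invertible transformation $(b_{\mu,\lambda})$.

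Finally, Proposition \ref{prop:newAbasis} tells us that $\{\bb_w : w \in W\}$ is a basis of $\AA$, and via the factorization $w = w_\lambda w_{(0)}$ this basis partitions into the blocks $\{\bb_{w_\lambda w_{(0)}} : \lambda \in \kBounded\}$ indexed by $w_{(0)} \in W_0$. Applying the invertible change of basis block by block yields that $\{\hh_\mu \uu_{w_{(0)}} : \mu \in \kBounded,\, w_{(0)} \in W_0\}$ is again a basis of $\AA$, which is the desired statement. No real obstacle arises; the only subtlety to check is that homogeneity of both bases of $\BB$ ensures the change-of-basis matrix is block-finite and hence genuinely invertible.
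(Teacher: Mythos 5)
Your proof is correct and follows essentially the same route as the paper, which simply invokes Proposition \ref{prop:newAbasis} together with the fact that $\{\hh_\lambda\}$ is a basis of $\BB$; your write-up just makes the block-by-block (fixed $w_{(0)}$) invertible change of basis between $\{\hh_\mu\}$ and $\{\nckschur_\lambda\}$ explicit.
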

\begin{proof}
    Follows from Proposition \ref{prop:newAbasis} and the fact that
    $\{\hh_\lambda\}$ is a basis of $\BB$.
\end{proof}

The above results allow us to extend linear endomorphisms of $\BB$ to linear
endomorphisms of $\AA$. Let $f: \BB \to \BB$ be a linear transformation of
$\BB$. Then we get a linear transformation $\widehat{f}: \AA \to \AA$ by
defining $\widehat{f}$ on the basis $\{\bb_w\}$ by
\begin{gather*}
    \widehat{f}\left(\bb_w\right) = \widehat{f}\left(\nckschur_\lambda\uu_{w_{(0)}}\right)
    = f\left(\nckschur_\lambda\right) \uu_{w_{(0)}},
\end{gather*}
where $w = w_\lambda w_{(0)}$ is the $0$-Grassmannian factorization of $w$.

\subsection{Commutation relation}
We prove a commutation relation between the operators $U_j$ and $D_i$. This
relation will allow us to bootstrap properties of $D_1$ and $U_j$ to every
operator $D_i$ via an inductive argument.

\begin{Proposition}[Commutation Relation]
    \begin{gather*}
        D_i \circ U_j = \sum_{e \geq 0} U_{j-e} \circ D_{i-e}
    \end{gather*}
\end{Proposition}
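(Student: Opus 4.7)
The plan is to expand both sides on the basis $\{\uu_u\}$ and establish the equality of coefficients bijectively. The coefficient of $\uu_u$ in $D_i(U_j(\uu_w))$ counts pairs $(w \weakstrip v, v \strongstrip u)$ with weak strip of length $j$ and strong strip of length $i$, while the coefficient of $\uu_u$ in $\sum_{e \geq 0} U_{j-e}(D_{i-e}(\uu_w))$ counts triples $(e, w \strongstrip v', v' \weakstrip u)$ with strong strip of length $i-e$ and weak strip of length $j-e$. It therefore suffices to exhibit a bijection between these two sets (with appropriate edge-label multiplicities on the strong-strip side).

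The bijection is constructed by ``pushing'' the strong strip backwards past the weak strip one atomic strong cover at a time. The technical core is a local switching lemma: given a weak strip $w \weakstrip v$ of length $n$ followed by a single strong cover $v \xrightarrow{\ell} v''$, the pair is in bijective correspondence with exactly one of:
\begin{enumerate}
\item[(a)] a strong cover $w \xrightarrow{\ell'} w'$ followed by a weak strip $w' \weakstrip v''$ of length $n$; or
\item[(b)] a weak strip $w \weakstrip v''$ of length $n-1$ (``absorption'').
\end{enumerate}
Writing the weak strip as $v = w_D w$ (reduced, $|D|=n$) and the strong cover as $v'' = v\, t_{a,b}$ with $a \leq 0 < b$, the dichotomy depends on whether $t_{a,b}$ can be commuted past $w_D$ while preserving the reduced-product structure, producing a new strong cover at $w$ (case~(a)); or whether conjugating by $w_D$ turns $t_{a,b}$ into a simple reflection already appearing in the cyclically decreasing factorization of $v$, which is then cancelled (case~(b)). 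Iterating this move along the $i$ edges of the given strong strip yields a strong strip $w \strongstrip v'$ of length $i-e$ (where $e$ is the total number of absorptions) followed by a weak strip $v' \weakstrip u$ of length $j-e$.

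Two consistency checks are required: that the accumulated covers on the left form a genuine strong strip, i.e.\ that their edge labels are strictly decreasing, and that the entire procedure is invertible. The first holds because the label of the new cover produced in case~(a) is determined by the formula $w'(b) = w(a)$, which respects the original strict decrease of the strong strip being pushed through; the moves are applied in an order that inserts new covers with larger labels first. Invertibility follows by reversing each local switch. The principal obstacle is the verification of the local switching lemma with the correct multiplicities: a single strong cover $v \to v''$ may arise from several distinct pairs $(a,b)$, and each must be matched accurately against the reduced factorizations of $w_D$ and $w_{D'}$. This reduces to a case analysis of how an affine transposition $t_{a,b}$ with $a \leq 0 < b$ interacts with a cyclically decreasing reduced word, together with a careful accounting of the marking $\ell = v''(b) = v(a)$ through the switch.
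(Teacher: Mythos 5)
Your opening reduction is exactly the paper's first step: the coefficient of $\uu_v$ in $(D_i\circ U_j)(\uu_u)$ counts pairs (weak strip of length $j$, then strong strip of length $i$), while the coefficient in $\sum_{e\ge 0}(U_{j-e}\circ D_{i-e})(\uu_u)$ counts triples $(e,\ \text{strong strip of length } i-e,\ \text{weak strip of length } j-e)$, so the proposition amounts to a bijection between these sets. The gap is everything after that. The paper does not construct this bijection; it invokes \cite[Proposition~4.1]{LLMS10}, where the strong/weak commutation is proved. You instead propose to build the bijection from scratch via a ``local switching lemma,'' but that lemma is only stated, and you yourself concede that ``the principal obstacle is the verification of the local switching lemma with the correct multiplicities.'' That verification \emph{is} the theorem: without it your argument proves nothing beyond the counting reformulation.

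Two of the deferred claims are genuinely delicate, not routine checks. First, a single edge of $\downgraph$ can carry several labels (for $k=2$ there are two marked covers from $s_0s_1s_2s_0$ to $s_1s_2s_0$), so the dichotomy (a)/(b) must be established at the level of \emph{marked} covers with exact multiplicities; this is precisely where a naive ``commute $t_{a,b}$ past $w_D$'' argument tends to over- or under-count. Second, the assertion that the labels of the newly created covers stay strictly decreasing ``because $w'(b)=w(a)$'' is not an argument: pushing a transposition past a cyclically decreasing element changes the relevant window values, and preserving the decreasing-marking condition (together with the constraint $\size(W)+e\le k$ that appears in the LLMS bijection) is the hard part of their proof, which proceeds by induction with a substantial case analysis. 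As written, your proof establishes the statement only modulo an unproved lemma equivalent to the cited result; either cite \cite[Proposition~4.1]{LLMS10}, as the paper does, or carry out that case analysis in full.
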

\begin{proof}
First note that the right hand side is a finite sum. The coefficient of $\uu_v$ in
\begin{align*}
  \left(D_i \circ U_j\right)(\uu_u)
= \sum_{ u \weakstrip w \atop \size(u \weakstrip w) = j }
  \sum_{ w \strongstrip v \atop \size(w \strongstrip v) = i } \uu_{v}
\end{align*}
is the number of tuples $(u \weakstrip w, w \strongstrip v)$
consisting of
a weak strip $u \weakstrip w$ of length $j$ and
a strong strip $w \strongstrip v$ of length $i$.
The coefficient of $\uu_v$ in
\begin{align*}
  \left(\sum_{e \geq 0} U_{j-e} \circ D_{i-e}\right)(\uu_u)
= \sum_{e \geq 0}
  \sum_{ u \strongstrip x \atop \size(u \strongstrip x) = i-e }
  \sum_{ x \weakstrip v \atop \size(x \weakstrip v) = j-e } \uu_{v}
\end{align*}
is the number of triples $(e, u \strongstrip x, x \weakstrip v)$
consisting of
a nonnegative integer $e$,
a strong strip $u \strongstrip x$ of length $i-e$ and
a weak strip $x \weakstrip v$ of length $j-e$.

By \cite[Proposition~4.1]{LLMS10}, these two numbers are the same. Indeed,
that proposition establishes a bijection between the sets:
\begin{align*}
    &\left\{
    (W', S') :
    \begin{array}{l}
        \text{$W'$ is a weak strip beginning at $u$,} \\
        \text{$S'$ is a strong strip ending at $v$}, \\
        \text{with $W'$ ending where $S'$ begins.}
    \end{array}
    \right\}
    \\
    &\qquad\qquad\qquad\qquad
    \longleftrightarrow
    \left\{
    (W, S, e) :
    \begin{array}{l}
        \text{$W$ is a weak strip ending at $v$,} \\
        \text{$S$ is a strong strip beginning at $u$,} \\
        \text{$e\geq0$ satisfies $\size(W)+e \leq k$,} \\
        \text{with $S$ ending where $W$ begins.}
    \end{array}
    \right\}
\end{align*}
such that
\begin{align*}
\size(S) &= \size(S') - e \\
\size(W) &= \size(W') - e. \qedhere
\end{align*}
\end{proof}

\begin{Corollary}[Bracket]
    \begin{align*}
        D_i \circ U_j - U_j \circ D_i = D_{i-1} \circ U_{j-1}
    \end{align*}
\end{Corollary}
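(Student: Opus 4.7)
My plan is to derive this identity as a direct algebraic consequence of the Commutation Relation above, with no further combinatorial input needed.

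First, I would isolate $U_j \circ D_i$ on the right-hand side of the Commutation Relation by peeling off the $e=0$ term:
\begin{align*}
    D_i \circ U_j - U_j \circ D_i = \sum_{e \geq 1} U_{j-e} \circ D_{i-e}.
\end{align*}
Then I would re-index by setting $f = e-1$ to get
\begin{align*}
    D_i \circ U_j - U_j \circ D_i = \sum_{f \geq 0} U_{(j-1)-f} \circ D_{(i-1)-f}.
\end{align*}
Finally, I would apply the Commutation Relation a second time, now with the parameters $i-1$ and $j-1$ in place of $i$ and $j$, to recognize the right-hand side as $D_{i-1} \circ U_{j-1}$.

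The only subtlety is taking care that the indices remain non-negative: if $i = 0$ or $j = 0$, then $D_{i-1}$ or $U_{j-1}$ is not defined, but in that case all terms on the right-hand side of the Commutation Relation with $e \geq 1$ vanish (since $U_{-e}$ and $D_{-e}$ are understood to be zero for $e > 0$), so both sides of the bracket identity equal $0$ and the statement is trivially true (or we simply restrict to $i,j \geq 1$). There is no real obstacle here; the content lies entirely in the previous proposition, and the corollary is a one-line reindexing.
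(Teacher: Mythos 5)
Your proposal is correct and is essentially the paper's own argument: peel off the $e=0$ term from the Commutation Relation and recognize the remaining sum, after re-indexing, as $D_{i-1}\circ U_{j-1}$ via a second application of the same relation. Your explicit handling of the edge cases $i=0$ or $j=0$ is a minor addition the paper leaves implicit.
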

\begin{proof}
    \begin{gather*}
        D_i \circ U_j 
        = \sum_{e \geq 0} U_{j-e} \circ D_{i-e}
        = U_{j} \circ D_{i} + \sum_{e \geq 1} U_{j-e} \circ D_{i-e}
        = U_{j} \circ D_{i} + D_{i-1} \circ U_{i-1}
        \qedhere
    \end{gather*}
\end{proof}

\subsection{$D_i$ stabilizes $\BB$}
We use the commutation relation of the previous section to prove that $D_i(\BB)
\subseteq \BB$. First we determine the image of $\hh_r$ under $D_i$.

\begin{Lemma}
    \label{Dh_r}
    For $r \leq k$ and all $i$,
    $$D_i(\hh_r) = \hh_{r-i}.$$
\end{Lemma}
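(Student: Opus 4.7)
The plan is to deduce this lemma directly from the Commutation Relation proved in the previous subsection, without touching the combinatorics of strong strips. The key observation is that $\hh_r$ can be realized as the result of applying the up-operator $U_r$ to the identity element of $\AA$: indeed, by definition $U_r$ is left multiplication by $\hh_r$, so $\hh_r = U_r(\uu_{\id}) = U_r(1)$.

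Applying $D_i$ and using the Commutation Relation,
\begin{align*}
D_i(\hh_r) \;=\; (D_i \circ U_r)(1) \;=\; \sum_{e \geq 0} (U_{r-e} \circ D_{i-e})(1).
\end{align*}
Now I would observe that $D_j(1) = 0$ for every $j \geq 1$: the identity has length $0$ and edges in $\downgraph$ strictly decrease length, so there are no paths of positive length starting at $1$. Meanwhile $D_0(1) = 1$ by convention (the unique path of length $0$ starting at $1$). Hence the only term in the sum that survives is the one with $e = i$, yielding $D_i(\hh_r) = U_{r-i}(1) = \hh_{r-i}$, with the convention that $\hh_{j} = 0$ when $j<0$ and $\hh_0 = 1$.

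The only mild subtlety is handling the edge case $i > r$, where $r - e$ becomes negative for $e = i$; but there $U_{r-i}$ is taken to be $0$ (equivalently, multiplication by $\hh_{r-i} = 0$), and the agreement $D_i(\hh_r) = 0 = \hh_{r-i}$ still holds. I do not expect any serious obstacle: essentially all the combinatorial content has been packaged into the Commutation Relation, and this lemma is just the result of evaluating that identity at the identity element of $\AA$.
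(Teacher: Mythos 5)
Your argument is correct and is essentially the paper's: both reduce the computation to $\hh_r = U_r(1_\AA)$, the Commutation Relation, and the vanishing $D_j(1_\AA)=0$ for $j\geq 1$ (with the edge case $i>r$ handled by the convention $\hh_{r-i}=0$). The only cosmetic difference is that the paper inducts on $i$ using the bracket form $D_i\circ U_r = U_r\circ D_i + D_{i-1}\circ U_{r-1}$, whereas you apply the full relation $D_i\circ U_r=\sum_{e\geq 0}U_{r-e}\circ D_{i-e}$ once and keep only the $e=i$ term, which avoids the induction.
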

\begin{proof}
    If $i > r$, then $D_i(\hh_r) = 0$ since there are no
    strong strips of size $i$ beginning at elements $w$ of length $r$.
    Also, by definition, $\hh_{r-i} = 0$. So suppose that $i \leq r$.
    Proceed by induction on $i$. If $i = 1$, then
    \begin{align*}
        D_1(\hh_r) 
        = (D_1 \circ U_r)(1_\AA) 
        = (U_r \circ D_1)(1_\AA) + (D_0 \circ U_{r-1})(1_\AA) 
        = 0_\AA + \hh_{r-1}.
    \end{align*}
    Suppose the result holds for $i-1$. Then
    \begin{align*}
        D_i(\hh_r) 
        &= (D_i \circ U_r)(1_\AA)
        = (U_r \circ D_i)(1_\AA) + (D_{i-1} \circ U_{r-1})(1_\AA) \\
        &= 0_\AA + D_{i-1}(\hh_{r-1}) = \hh_{r-i}.
        \qedhere
    \end{align*}
\end{proof}

\begin{Theorem}
    Let $J$ be a composition. Then $D_J$ stabilizes $\BB$; that is,
        $$D_J(\BB) \subseteq \BB.$$
\end{Theorem}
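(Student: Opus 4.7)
The plan is to reduce the general statement about $D_J$ to the special case of $D_i$ for a single integer $i$, and then prove the latter by induction on the number of parts of a $k$-bounded partition $\lambda$, using the commutation relation.

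First I would observe that, by the Corollary above, $D_{i_1} \circ \cdots \circ D_{i_r} = \sum_{J \preceq I} D_J$, which by M\"obius inversion on the reverse refinement poset expresses each $D_J$ as an integer linear combination of compositions $D_{j_1} \circ \cdots \circ D_{j_s}$ of single-part operators. Equivalently, one may argue directly by induction on the length of $J$ using $D_I \circ D_J = D_{I \boxplus J} + D_{I \boxdot J}$. Either way, it suffices to show that $D_i(\BB) \subseteq \BB$ for every $i \geq 0$. Since $\{\hh_\lambda : \lambda \in \kBounded\}$ is a basis of $\BB$, the statement further reduces to proving $D_i(\hh_\lambda) \in \BB$ for each $k$-bounded partition $\lambda$.

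Next, I would induct on the number of parts of $\lambda$. The base case $\lambda = (r)$ is precisely Lemma \ref{Dh_r}, which gives $D_i(\hh_r) = \hh_{r-i} \in \BB$. For the inductive step, write $\lambda = (\lambda_1, \lambda')$ with $\lambda' = (\lambda_2, \dots, \lambda_t)$, so that $\hh_\lambda = \hh_{\lambda_1} \hh_{\lambda'} = U_{\lambda_1}(\hh_{\lambda'})$. Applying the Commutation Relation yields
\begin{align*}
    D_i(\hh_\lambda) = (D_i \circ U_{\lambda_1})(\hh_{\lambda'}) = \sum_{e \geq 0} (U_{\lambda_1 - e} \circ D_{i-e})(\hh_{\lambda'}) = \sum_{e \geq 0} \hh_{\lambda_1 - e} \cdot D_{i-e}(\hh_{\lambda'}).
\end{align*}
By the inductive hypothesis, $D_{i-e}(\hh_{\lambda'}) \in \BB$; since $\lambda_1 \leq k$ we have $\hh_{\lambda_1 - e} \in \BB$ (with the convention $\hh_m = 0$ for $m < 0$); and $\BB$ is a subalgebra, so each summand lies in $\BB$, completing the induction.

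There is no real obstacle here: the Commutation Relation does all of the nontrivial work, and the only care required is to ensure that the indices $\lambda_1 - e$ remain in the range $\{-\infty, \dots, k\}$ so that $\hh_{\lambda_1 - e}$ is either zero or a genuine generator of $\BB$, which is immediate from $\lambda_1 \leq k$ and $e \geq 0$.
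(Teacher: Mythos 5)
Your proof is correct and takes essentially the same route as the paper: reduce $D_J$ to the single-part operators $D_i$, write $\hh_\lambda = U_{\lambda_1}(\hh_{\lambda'})$, commute $D_i$ past $U_{\lambda_1}$, and handle the one-part base case via Lemma \ref{Dh_r}. The only (harmless) difference is that you apply the full Commutation Relation, so a single induction on the number of parts suffices, whereas the paper uses the two-term Bracket identity and therefore inducts on both $i$ and the number of factors.
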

\begin{proof}
It suffices to prove this for the operators $D_i$ since $D_J$ is a linear
combination of compositions of these operators.
Since $\BB$ is spanned by the products
$\hh_{j_1} \hh_{j_2} \cdots \hh_{j_l}$,
it suffices to show that
$D_i(\hh_{j_1} \hh_{j_2} \cdots \hh_{j_l}) \in \BB$.
Proceed by induction on $i$ and $l$.
If $l=1$, then by Lemma \ref{Dh_r}, $D_i(\hh_{j_1}) = \hh_{j_1-i} \in
\BB$. For $i=1$ this was proved in \cite[Theorem~3.9]{BSS11}.
If $l > 1$, then
\begin{align*}
D_i(\hh_{j_1} \hh_{j_2} \cdots \hh_{j_l})
&= \left(D_i \circ U_{j_1}\right)(\hh_{j_2} \cdots \hh_{j_l}) \\
&= \left(U_{j_1} \circ D_i\right)(\hh_{j_2} \cdots \hh_{j_l})
+ \left(D_{i-1} \circ U_{j_1-1}\right)(\hh_{j_2} \cdots \hh_{j_l}) \\
&= \hh_{j_1} D_i(\hh_{j_2} \cdots \hh_{j_l})
+ D_{i-1}(\hh_{j_1-1} \hh_{j_2} \cdots \hh_{j_l}) \in \BB.
\qedhere
\end{align*}
\end{proof}

Since the noncommutative $k$-Schur functions form a basis of $\BB$, it is
natural to ask for the expansion of $D_i(\nckschur_\lambda)$ in terms of
noncommutative $k$-Schur functions. We obtain the following combinatorial description in terms
of strong strips. Recall that $w_\lambda$ denotes the $0$-Grassmannian element
corresponding to the $k$-bounded partition $\lambda$ under the bijection
between $\kBounded$ and $W^0$.

\begin{Theorem}
    \label{PieriRulePerp}
    $$D_i\left(\nckschur_\lambda\right) 
    = \sum_{\size(w_\lambda \strongstrip w_\mu)=i} \nckschur_\mu.$$
\end{Theorem}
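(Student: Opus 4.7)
By the theorem just proved, $D_i(\nckschur_\lambda) \in \BB$, so we may uniquely write $D_i(\nckschur_\lambda) = \sum_\mu c_\mu \nckschur_\mu$. The concluding observation of Section~\ref{ss:noncommkschurs} (namely, that for any element of $\BB$ expressed in the $\uu$-basis of $\AA$, its coefficient on $\nckschur_\mu$ equals its coefficient on $\uu_{w_\mu}$) reduces the theorem to proving
\[
    [\uu_{w_\mu}]\, D_i(\nckschur_\lambda) \;=\; \#\{w_\lambda \strongstrip w_\mu : \size = i\}
\]
for every $k$-bounded partition $\mu$. By expanding $\nckschur_\lambda = \sum_w \Langle \nckschur_\lambda, \uu_w\Rangle_\AA\, \uu_w$ and applying the definition of $D_i$ on basis elements, this coefficient equals $\sum_{w\in W,\ w \strongstrip w_\mu,\ \size=i} \Langle \nckschur_\lambda, \uu_w\Rangle_\AA$, and the $w = w_\lambda$ term already contributes the desired count (since $\Langle \nckschur_\lambda, \uu_{w_\lambda}\Rangle_\AA = 1$) while the other Grassmannian $w$'s contribute zero. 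So the theorem is equivalent to the vanishing of the sum over non-Grassmannian $w$.

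\textbf{Induction.} I would proceed by induction on $|\lambda|$. The base case $\lambda = \emptyset$ is immediate: $\nckschur_\emptyset = 1_\AA$, so $D_i(1_\AA)=0$ for $i\geq1$, matching the empty right-hand side. For the inductive step, use the noncommutative $k$-Pieri rule (Section~\ref{ss:noncommkschurs}) to pick $\nu$ with $|\nu| < |\lambda|$ and $j \geq 1$ so that $\nckschur_\lambda$ is a summand of
\[
    \hh_j \nckschur_\nu \;=\; U_j(\nckschur_\nu) \;=\; \sum_{\lambda'} \nckschur_{\lambda'},
\]
the sum ranging over weak strips $w_\nu \weakstrip w_{\lambda'}$ of size $j$. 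Applying $D_i$ to both sides and invoking the Commutation Relation,
\[
    \sum_{\lambda'} D_i(\nckschur_{\lambda'}) \;=\; \hh_j\, D_i(\nckschur_\nu) \;+\; D_{i-1}\bigl(\hh_{j-1}\nckschur_\nu\bigr),
\]
where the right-hand side is completely known by the inductive hypothesis: $D_i(\nckschur_\nu)$ is given by strong strips from $w_\nu$ since $|\nu|<|\lambda|$, and $\hh_{j-1}\nckschur_\nu$ expands via the $k$-Pieri rule into a sum of $\nckschur_\tau$'s with $|\tau|=|\nu|+j-1<|\lambda|$, each of which is then handled by the inductive hypothesis for $D_{i-1}$.

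\textbf{Main obstacle.} The left-hand side is a sum over several $\lambda'$, so this equation by itself only determines $\sum_{\lambda'} D_i(\nckschur_{\lambda'})$ and not the individual term $D_i(\nckschur_\lambda)$. I would resolve this by comparing the coefficient of $\uu_{w_\mu}$ on both sides and reinterpreting the resulting numerical identity combinatorially: the right-hand side counts triples (strong strip from $w_\nu$ of some size $i-e$, weak strip of size $j-e$ ending at $w_\mu$, $e \geq 0$), while the left-hand side counts pairs (weak strip $w_\nu \weakstrip w_{\lambda'}$ of size $j$, strong strip $w_{\lambda'} \strongstrip w_\mu$ of size $i$). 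The LLMS bijection \cite[Proposition~4.1]{LLMS10}---the same one used in the proof of the Commutation Relation---matches these two counts. The crucial point, which I expect to be the technically involved step, is that the bijection restricts well to fixed $\lambda'$: grouping the weak-then-strong pairs by the intermediate vertex $w_{\lambda'}$, the matched strong-then-weak triples on the other side can be extracted term-by-term so that the identity $[\uu_{w_\mu}]D_i(\nckschur_{\lambda'}) = \#\{w_{\lambda'} \strongstrip w_\mu : \size = i\}$ holds for each $\lambda'$ individually, completing the induction and hence the theorem.
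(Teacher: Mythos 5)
Your opening reduction is exactly the paper's setup: since $D_i(\nckschur_\lambda)\in\BB$, the coefficient of $\nckschur_\mu$ equals the coefficient of $\uu_{w_\mu}$, which is $\sum_{w}\Langle\nckschur_\lambda,\uu_w\Rangle_\AA\cdot\#\{w\strongstrip w_\mu \text{ of size } i\}$, and the theorem reduces to showing that non-Grassmannian $w$ contribute nothing. But at that point the paper is already finished, by a fact you are missing: a strong strip that ends at a $0$-Grassmannian element necessarily \emph{begins} at a $0$-Grassmannian element (\cite[Proposition~2.6]{LLMS10}). Hence for non-Grassmannian $w$ there are no strong strips $w\strongstrip w_\mu$ at all, so every term of your ``remaining sum'' vanishes individually, with no induction needed.

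Your substitute for this fact --- the induction on $|\lambda|$ --- has a genuine gap, which you yourself flag as the ``main obstacle'' and then do not close. The identity $\sum_{\lambda'} D_i(\nckschur_{\lambda'}) = \hh_j\,D_i(\nckschur_\nu) + D_{i-1}(\hh_{j-1}\nckschur_\nu)$ only determines the sum over all weak strips $w_\nu\weakstrip w_{\lambda'}$ of size $j$; every $\lambda'$ appearing there has $|\lambda'|=|\lambda|$, so the inductive hypothesis gives no way to isolate the single term $D_i(\nckschur_\lambda)$. Your proposed fix --- that the bijection of \cite[Proposition~4.1]{LLMS10} ``restricts well to fixed $\lambda'$'' so the counts can be extracted term by term --- is precisely the statement that would need proof, and it is not available for free: that bijection equates the \emph{total} number of weak-then-strong pairs with the \emph{total} number of strong-then-weak triples and does not preserve, or even record, the intermediate vertex, so grouping by $\lambda'$ on one side has no counterpart on the other. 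As written, your argument establishes only the summed identity, not the theorem; citing Proposition~2.6 of \cite{LLMS10} in your first paragraph replaces the entire inductive apparatus and yields the paper's one-line proof.
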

\begin{proof}
    Since $D_i(\nckschur_\lambda) \in \BB$, to compute its expansion in terms
    of $k$-Schur functions, it suffices to compute the coefficient of
    $\uu_{w}$ for $0$-Grassmannian elements $w$ (see \S\ref{ss:noncommkschurs}). This
    is the number of strong strips $v \strongstrip w$ of length $i$ with
    $\uu_v$ appearing as a term in $\nckschur_\lambda$. But a strong strip that
    ends at a $0$-Grassmannian element necessarily begins at a $0$-Grassmannian
    element \cite[Proposition~2.6]{LLMS10}, and there is a unique term $\uu_v$
    appearing in $\nckschur_\lambda$ with $v$ a $0$-Grassmannian element, namely
    $\uu_{w_\lambda}$.
\end{proof}

\subsection{Restriction to $\BB$}

We prove that $D_J$ is determined by its restriction to $\BB$ and we
identify this restriction as a linear operator adjoint to multiplication by a
symmetric function with respect to the pairing on $\Lambda_{(k)} \times
\Lambda^{(k)}$.

\begin{Theorem}
    \label{sym-module-morphism}
    Suppose $w \in W$ and $v \in W_0$. Then
    \begin{gather*}
        U_j(\uu_w \uu_v) = U_j(\uu_w) \uu_v \\
        D_i(\uu_w \uu_v) = D_i(\uu_w) \uu_v
    \end{gather*}
    Consequently, $U_j$ and $D_i$ are completely determined by their
    restriction to $\BB$.
\end{Theorem}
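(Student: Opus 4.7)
The proof divides into two pieces: the $U_j$ identity is immediate from the definition, while the $D_i$ identity requires a combinatorial argument about strong strips.

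For $U_j$, recall that $U_j$ is defined as left multiplication by $\hh_j \in \BB$. Associativity of $\AA$ immediately gives $U_j(\uu_w \uu_v) = \hh_j\uu_w\uu_v = (\hh_j\uu_w)\uu_v = U_j(\uu_w)\uu_v$, with no hypothesis on $v$ needed.

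For $D_i$, the strategy is to produce a bijection between the strong strips of length $i$ beginning at $wv$ and the strong strips of length $i$ beginning at $w$ whose endpoint $u$ satisfies $\ell(uv) = \ell(u) + \ell(v)$, implemented cover-by-cover by $(x \to y) \mapsto (xv \to yv)$. The key combinatorial input is that every $v \in W_0 = \langle s_1, \ldots, s_k \rangle$ permutes $\{1, \ldots, k+1\}$ within itself and extends periodically, so $v$ preserves the sign partition $\mathbb{Z} = \{a \leq 0\} \sqcup \{a > 0\}$. Consequently, conjugation by $v$ sends an affine transposition $t_{i,j}$ with $i \leq 0 < j$ to $v t_{i,j} v^{-1} = t_{v(i), v(j)}$, which still satisfies the sign condition; and a direct computation shows that an edge $x \to y$ in $\downgraph$ with $y = xt_{i,j}$ lifts to an edge $xv \to yv$ with the same label, since $(yv)(v^{-1}(j)) = y(j)$. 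This preserves the decreasing-labels condition defining strong strips.

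The substantive technical point is to verify that the lift $(x \to y) \mapsto (xv \to yv)$ actually produces a cover, i.e.\ that $\ell(yv) = \ell(xv) - 1$. This reduces to the claim that the length-additivity $\ell(w_a v) = \ell(w_a) + \ell(v)$ propagates along a strip $w = w_0 \to w_1 \to \cdots \to w_m$ under suitable boundary assumptions. I would reduce to the case $v = s_a$ with $a \in \{1, \ldots, k\}$ by induction on $\ell(v)$, and then split on whether $s_a$ is a right ascent or right descent of $w$. In the ascent case a length-parity argument (reflections change length by odd amounts) forces the length of each $w_b s_a$ relative to $w_b$ to be $+1$ rather than $+3, +5, \ldots$, since otherwise the endpoint length-additivity would be violated. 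In the descent case $\uu_w \uu_{s_a} = 0$, and the claim becomes that every $\uu_u \uu_{s_a}$ appearing in $D_i(\uu_w)\uu_{s_a}$ is likewise zero; this amounts to showing that right descents at generators of $W_0$ are preserved by strong covers, which can be read off from the label structure on $\downgraph$. Expect this preservation of descents to be the main obstacle.

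The final clause follows immediately from Corollary \ref{coro:newAbasis}: every element of $\AA$ is a linear combination of products $\hh_\lambda \uu_v$ with $v \in W_0$, so the two identities just proved reduce the action of $U_j$ or $D_i$ on any element of $\AA$ to its action on $\BB$ followed by right multiplication by a $\uu_v$.
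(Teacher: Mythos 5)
Your treatment of $U_j$ and of the final clause (via Corollary \ref{coro:newAbasis}) is fine, and your preliminary observations are correct: elements of $W_0$ preserve the partition of $\mathbb{Z}$ into positives and non-positives, $v^{-1}t_{i,j}v = t_{v^{-1}(i),v^{-1}(j)}$ keeps the sign condition, and the label of the lifted edge is $(yv)(v^{-1}(j)) = y(j)$. But there is a genuine gap exactly where you flag it: you never prove that the lift $(x \to y) \mapsto (xv \to yv)$ sends covers to covers, i.e.\ that $\ell(yv) = \ell(xv) - 1$, equivalently (after reducing to $v = s_a$) that an ascent or descent at $s_a$, $a \neq 0$, propagates along edges of $\downgraph$. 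This is not a routine verification. In the descent case your needed claim --- that a right descent at $s_a$ is inherited by the endpoint of every marked strong cover --- is essentially equivalent to the $i=1$ instance of the theorem itself, so asserting that it ``can be read off from the label structure'' and then calling it ``the main obstacle'' leaves the theorem unproved. In the ascent case the parity argument you propose does not do what you want: $\ell(us_a)-\ell(u)$ is always $\pm 1$, so there are no values $+3, +5, \dots$ to exclude; the real issue is ruling out $-1$ at the intermediate vertices of a strip, and length-additivity at the two endpoints alone does not force this without the very descent-preservation statement you deferred.

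For comparison, the paper avoids this combinatorics entirely: it takes the case $i=1$ as known (it is \cite[Theorem~3.10]{BSS11}) and bootstraps to all $i$ by a double induction, on $i$ and on the number of factors in $\hh_{j_1}\cdots\hh_{j_l}\uu_v$, using the bracket relation $D_i \circ U_j = U_j \circ D_i + D_{i-1}\circ U_{j-1}$, the vanishing $D_i(\uu_v)=0$ for $v \in W_0$, and the basis of Corollary \ref{coro:newAbasis}. If you want to keep your direct bijective route, you must actually prove the cover-preservation lemma (say, by a window-notation analysis of when right multiplication by $t_{i,j}$ with $i \le 0 < j$ is a Bruhat cover and how that interacts with the inversion at positions $a, a+1$); that lemma is comparable in depth to the result the paper imports from \cite{BSS11}, not a consequence of the label bookkeeping you have set up.
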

\begin{proof}
Since $U_j$ is left-multiplication by $\hh_j$, associativity
implies that $U_j(\uu_w \uu_v) = U_j(\uu_w) \uu_v$, establishing
the first equality.
By Corollary \ref{coro:newAbasis}, it suffices to show that
$D_i(\hh_{j_1} \hh_{j_2} \cdots \hh_{j_l} \uu_v) =
D_i(\hh_{j_1} \hh_{j_2} \cdots \hh_{j_l}) \uu_v$.
Proceed by induction. The case $i=1$ was proved in \cite[Theorem~3.10]{BSS11}.
Suppose the result holds for $D_{i-1}$. We prove the result also holds
for $D_i$ by induction on $l$. If $l=1$, then
\begin{align*}
    D_i(\hh_{j_1} \uu_v)
    = (D_i \circ U_{j_1})(\uu_v)
    = (U_{j_1} \circ D_{i})(\uu_v) + (D_{i-1} \circ U_{j_1-1})(\uu_v).
\end{align*}
Note that $D_i(\uu_v) = 0$ because there is no strong strip starting
from $v \in W_0$. And since the result holds for $D_{i-1}$, we have
\begin{align*}
    (D_{i-1} \circ U_{j_1-1})(\uu_v)
    = D_{i-1}(\hh_{j_1-1}\uu_v)
    = D_{i-1}(\hh_{j_1-1})\uu_v
    = D_i(\hh_{j_1})\uu_v.
\end{align*}
For $l > 1$, use the identity
$D_i \circ U_{j_1} = U_{j_1} \circ D_i + D_{i-1} \circ U_{j_1-1}$
to write
\begin{align*}
    D_i(\hh_{j_1} \hh_{j_2} \cdots \hh_{j_l} \uu_v)
    &= (U_{j_1} \circ D_i)(\hh_{j_2} \cdots \hh_{j_l} \uu_v)
    + D_{i-1} (\hh_{j_1-1}\hh_{j_2} \cdots \hh_{j_l} \uu_v).
\end{align*}
Since the product $\hh_{j_2} \cdots \hh_{j_l}$ involves less than $l$
terms, by induction we have that
\begin{align*}
    (U_{j_1} \circ D_i)(\hh_{j_2} \cdots \hh_{j_l} \uu_v)
    = U_{j_1} \left( D_i(\hh_{j_2} \cdots \hh_{j_l}) \uu_v \right)
    = (U_{j_1} \circ D_i)(\hh_{j_2} \cdots \hh_{j_l}) \uu_v.
\end{align*}
Since the result holds for $D_{i-1}$, we have that
\begin{align*}
    D_{i-1} (\hh_{j_1-1}\hh_{j_2} \cdots \hh_{j_l} \uu_v)
    = D_{i-1} (\hh_{j_1-1}\hh_{j_2} \cdots \hh_{j_l}) \uu_v.
\end{align*}
Hence,
    $D_i(\hh_{j_1} \hh_{j_2} \cdots \hh_{j_l} \uu_v)
    = D_i(\hh_{j_1} \hh_{j_2} \cdots \hh_{j_l}) \uu_v$,
as desired.
\end{proof}

We next identify the restriction of $D_J$ to $\BB$. For a composition $J$, let
$s_J$ denote the \emph{ribbon Schur function} indexed by $J$ (for a good introduction to ribbon Schur functions, see for instance \cite{BTW06}) and
let $\ribbonfcn{J}$ denote its image in $\Lambda^{(k)}$.
Recall that
    $\ribbonfcn{J}^\perp:\Lambda_{(k)} \to \Lambda_{(k)}$
is the linear operator adjoint to multiplication by $\ribbonfcn{J}$ in 
$\Lambda^{(k)}$. We also denote the corresponding linear operator on $\BB$ by
$\ribbonfcn{J}^\perp$.
\begin{Theorem}
    \label{restriction}
    The restriction of $D_J$ to $\BB$ is $\ribbonfcn{J}^\perp$.
    Consequently, $D_J$ is the extension to $\AA$, as defined in
    \S\ref{ss:extensions}, of $\ribbonfcn{J}^\perp:\BB \to \BB$.
\end{Theorem}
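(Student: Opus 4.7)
The plan is to proceed in three stages: first handle the single-part case $J = [i]$ by matching $D_i|_\BB$ with $h_i^\perp$ via a shared Leibniz-type recursion; then bootstrap to arbitrary compositions $J$ using the Corollary of Section~3 together with the classical ribbon Schur expansion of $h_J$; and finally invoke Theorem~\ref{sym-module-morphism} to identify $D_J$ globally with the extension $\widehat{\ribbonfcn{J}^\perp}$ of Section~\ref{ss:extensions}.

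For the base case $J=[i]$, I would apply the commutation relation $D_i \circ U_j = \sum_{e \geq 0} U_{j-e} \circ D_{i-e}$ to an arbitrary $g \in \BB$ to obtain the Leibniz-type identity
\[
D_i(\hh_j g) \;=\; \sum_{e \geq 0} \hh_{j-e}\, D_{i-e}(g).
\]
The operator $h_i^\perp$ on $\Lambda_{(k)}$ satisfies the identical recursion, by the Hopf-algebra coproduct identity $\Delta h_i = \sum_{a+b=i} h_a \otimes h_b$ together with $h_a^\perp(h_j) = h_{j-a}$. Both operators annihilate $1$ when $i > 0$ (no strong strip begins at the identity), and by Lemma~\ref{Dh_r} they coincide on a single generator: $D_i(\hh_r) = \hh_{r-i} = h_i^\perp(h_r)$. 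Induction on the number of factors of $\hh_\lambda = \hh_{\lambda_1}\cdots\hh_{\lambda_t}$ then gives $D_i|_\BB = h_i^\perp = \ribbonfcn{[i]}^\perp$.

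For general $J=[j_1,\ldots,j_r]$, the Corollary of Section~3 combined with the base case gives
\[
\sum_{J' \preceq J} D_{J'}\big|_\BB \;=\; D_{j_1} \circ \cdots \circ D_{j_r}\big|_\BB \;=\; h_{j_1}^\perp \circ \cdots \circ h_{j_r}^\perp \;=\; h_J^\perp,
\]
where the last equality uses $(fg)^\perp = g^\perp \circ f^\perp$. On the other hand, iterating the classical ribbon-Schur product formula $s_I s_J = s_{I \boxplus J} + s_{I \boxdot J}$ yields $h_J = \sum_{J' \preceq J} s_{J'}$, whence $h_J^\perp = \sum_{J' \preceq J} \ribbonfcn{J'}^\perp$ in $\Lambda^{(k)}$. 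Equating these two expressions produces the uni-triangular system
\[
\sum_{J' \preceq J} D_{J'}\big|_\BB \;=\; \sum_{J' \preceq J} \ribbonfcn{J'}^\perp \qquad \text{for every composition } J,
\]
and induction along the poset $\preceq$ isolates $D_J|_\BB = \ribbonfcn{J}^\perp$.

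For the final conclusion, Theorem~\ref{sym-module-morphism} gives $D_i(\uu_w\uu_v) = D_i(\uu_w)\uu_v$ for $v \in W_0$; this property is preserved under composition and linear combinations, so by the Proposition expressing $D_I \circ D_J$ in terms of $D_{I\boxplus J}$ and $D_{I\boxdot J}$, every $D_J$ inherits the same equivariance. Applied to the basis vector $\bb_w = \nckschur_\lambda\uu_{w_{(0)}}$, this gives $D_J(\bb_w) = \ribbonfcn{J}^\perp(\nckschur_\lambda)\uu_{w_{(0)}}$, identifying $D_J$ with $\widehat{\ribbonfcn{J}^\perp}$. The principal obstacle lies in Stage~1: verifying that $D_i|_\BB$ satisfies exactly the Leibniz-type recursion of $h_i^\perp$ on products of $\hh_j$'s; once this matching is in place, the remainder of the argument is a formal triangular inversion on the refinement poset of compositions.
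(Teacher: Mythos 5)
Your argument is correct, but it reaches the base case by a genuinely different route than the paper. The paper proves the single-row case $J=[j]$ by combining Theorem \ref{PieriRulePerp} with the dual Pieri rule of Lam--Lapointe--Morse--Shimozono ($\overline{h_j}\,\dualkschur_{w_\mu}=\sum\dualkschur_{w_\lambda}$ over strong strips, \cite[Theorem~4.13]{LLMS10}), pairing $D_J(\kschur_\lambda)$ against $\dualkschur_\mu$ and then inducting on the length of $J$ via $D_J=D_{j_1}\circ D_{[j_2,\dots,j_l]}-D_{[j_1+j_2,\dots,j_l]}$ and the ribbon product rule. You instead avoid Theorem \ref{PieriRulePerp} and the dual Pieri rule altogether: applying the Commutation Relation to $g\in\BB$ gives the recursion $D_i(\hh_j g)=\sum_{e\ge 0}\hh_{j-e}D_{i-e}(g)$, which you match against the coproduct-induced Leibniz rule for $h_i^\perp$, with Lemma \ref{Dh_r} (or just the vanishing on $1_\AA$) seeding the induction on the number of $\hh$-factors; this is self-contained within the operator calculus already set up (it rests only on \cite[Proposition~4.1]{LLMS10} through the Commutation Relation) and yields the stability $D_i(\BB)\subseteq\BB$ as a by-product. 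Your passage to general $J$ by summing over coarsenings ($\sum_{J'\preceq J}D_{J'}=D_{j_1}\circ\cdots\circ D_{j_r}$ versus $\overline{h_{j_1}\cdots h_{j_r}}=\sum_{J'\preceq J}\ribbonfcn{J'}$) and inverting triangularly is essentially equivalent to the paper's peel-off induction, and your explicit treatment of the ``consequently'' clause via Theorem \ref{sym-module-morphism} and the basis $\{\bb_w\}$ is welcome, since the paper leaves it implicit. Two small points you should make explicit: (i) the operator $\ribbonfcn{[i]}^\perp$, defined as the adjoint for the $\Lambda_{(k)}\times\Lambda^{(k)}$ pairing, coincides with the restriction to $\Lambda_{(k)}$ of the ordinary skewing operator $h_i^\perp$ on $\Lambda$ (this uses that skewing by $h_i$ preserves $\Lambda_{(k)}$ and the compatibility $\langle f,g\rangle=\langle f,\widetilde g\rangle_\Lambda$ recorded in the background section), so the Leibniz rule you quote really applies to the operator in the theorem; and (ii) the induction in Stage~1 must be run simultaneously over all $i$, since $D_{i-e}$ for all $e$ appears in the recursion. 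Neither point is a gap, just bookkeeping.
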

\begin{proof}
    In the following, let $D_J(\kschur_\lambda)$ denote the image
    $D_J(\nckschur_\lambda)$ under the isomorphism $\BB \to \Lambda_{(k)}$.
    We will prove, for all $\kschur_\lambda$ and $\dualkschur_\mu$,
    $$\Langle D_J\left( \kschur_\lambda \right), \dualkschur_{\mu} \Rangle 
    = \Langle \kschur_\lambda, \ribbonfcn{J}\dualkschur_{\mu} \Rangle.$$
    Proceed by induction on the length of $J = [j_1,j_2,\dots,j_l]$.
    Suppose $l = 1$. Then it suffices to prove that
    \begin{gather*}
        \Langle D_j\left( \kschur_\lambda \right), \dualkschur_\mu \Rangle 
        = \Langle \kschur_\lambda, \overline{h_j}\dualkschur_\mu \Rangle.
    \end{gather*}
    But this follows immediately from Theorem \ref{PieriRulePerp} and
    the Pieri rule: $\overline{h_j} \dualkschur_{w_\mu} = \sum
    \dualkschur_{w_\lambda}$ with the sum running over all strong strips
    ${w_\lambda} \strongstrip w_\mu$ of size $j$ (see \cite[Theorem 4.13]{LLMS10}).

    Now suppose the result holds for compositions of length less than
    $l$. Let $J = [j_1, j_2, \dots, j_l]$. Observe that
    \begin{align*}
        D_J = D_{j_1} \circ D_{[j_2, \dots, j_l]} - D_{[j_1 + j_2, \dots, j_l]},
    \end{align*}
    so by induction and the product rule for ribbon Schur functions
    \cite[\S 169]{Mac16},
    \begin{align*}
        D_J 
        &=
        \ribbonfcn{j_1}^\perp \circ \ribbonfcn{[j_2, \dots, j_l]}^\perp - 
        \ribbonfcn{[j_1 + j_2, \dots, j_l]}^\perp, \\
        &=
        \overline{s_{j_1}s_{[j_2, \dots, j_l]}
        - s_{[j_1 + j_2, \dots, j_l]}}^\perp
        = \ribbonfcn{J}^\perp.
        \qedhere
    \end{align*}
\end{proof}

\begin{Theorem}\label{cor:commute}
    The operators $D_J$ and $D_K$ commute.
\end{Theorem}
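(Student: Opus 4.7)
The plan is to reduce the commutativity of $D_J$ and $D_K$ on all of $\AA$ to the commutativity of their restrictions $\ribbonfcn{J}^\perp$ and $\ribbonfcn{K}^\perp$ on $\BB$, and then to observe that this reduced statement is essentially a fact about the commutative algebra $\Lambda^{(k)}$. By Theorem \ref{restriction}, $D_J = \widehat{\ribbonfcn{J}^\perp}$ and $D_K = \widehat{\ribbonfcn{K}^\perp}$, where $\widehat{(\cdot)}$ denotes the extension from $\BB$ to $\AA$ defined in \S\ref{ss:extensions}. So it suffices to prove two things: first, that extension is compatible with composition; and second, that $\ribbonfcn{J}^\perp$ and $\ribbonfcn{K}^\perp$ commute on $\BB$.

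For the first, I would verify directly from the definition that $\widehat{f \circ g} = \widehat{f} \circ \widehat{g}$ for any linear maps $f, g : \BB \to \BB$. For $\bb_w = \nckschur_\lambda \uu_{w_{(0)}}$, expand $g(\nckschur_\lambda) = \sum_\mu c_\mu \nckschur_\mu$ in $\BB$. Since $w_\mu \in W^0$ is a minimal coset representative, the product $w_\mu w_{(0)}$ is reduced with $0$-Grassmannian factorization $w_\mu \cdot w_{(0)}$, so $\nckschur_\mu \uu_{w_{(0)}} = \bb_{w_\mu w_{(0)}}$. Hence
\begin{align*}
    \widehat{f}(\widehat{g}(\bb_w))
    = \widehat{f}\Bigl(\sum_\mu c_\mu \bb_{w_\mu w_{(0)}}\Bigr)
    = \sum_\mu c_\mu f(\nckschur_\mu) \uu_{w_{(0)}}
    = f(g(\nckschur_\lambda)) \uu_{w_{(0)}}
    = \widehat{f \circ g}(\bb_w),
\end{align*}
which gives the desired functoriality by linearity.

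For the second, the operator $\ribbonfcn{J}^\perp$ is by definition adjoint to multiplication by $\ribbonfcn{J}$ in the commutative ring $\Lambda^{(k)}$. Since $\ribbonfcn{J} \cdot \ribbonfcn{K} = \ribbonfcn{K} \cdot \ribbonfcn{J}$, taking adjoints (which reverses composition order) gives $\ribbonfcn{K}^\perp \circ \ribbonfcn{J}^\perp = \ribbonfcn{J}^\perp \circ \ribbonfcn{K}^\perp$ on $\Lambda_{(k)}$, and therefore on $\BB$ via the isomorphism $h_\lambda \mapsto \hh_\lambda$. Combining with the first step yields
\begin{align*}
    D_J \circ D_K
    = \widehat{\ribbonfcn{J}^\perp} \circ \widehat{\ribbonfcn{K}^\perp}
    = \widehat{\ribbonfcn{J}^\perp \circ \ribbonfcn{K}^\perp}
    = \widehat{\ribbonfcn{K}^\perp \circ \ribbonfcn{J}^\perp}
    = D_K \circ D_J,
\end{align*}
completing the proof. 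The only step requiring any real care is the functoriality of the extension, and even there the work is bookkeeping once one notices that $\nckschur_\mu \uu_{w_{(0)}}$ is itself an element of the basis $\{\bb_v\}$.
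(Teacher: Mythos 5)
Your proof is correct and follows essentially the same route as the paper: reduce commutativity on $\AA$ to commutativity of $\ribbonfcn{J}^\perp$ and $\ribbonfcn{K}^\perp$ on $\BB$ via Theorem \ref{restriction}, and then invoke commutativity of multiplication in $\Lambda^{(k)}$. Your functoriality lemma $\widehat{f\circ g}=\widehat{f}\circ\widehat{g}$ (resting on the observation that $\nckschur_\mu \uu_{w_{(0)}} = \bb_{w_\mu w_{(0)}}$) is just a repackaging of the paper's appeal to Theorem \ref{sym-module-morphism}, namely that $D_J$ commutes with right multiplication by $\uu_v$ for $v \in W_0$, so the two arguments coincide in substance.
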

\begin{proof}
    $\AA$ is spanned by elements of the form $\bb \uu_w$ with $\bb \in \BB$ and
    $w \in W_0$ (Proposition \ref{prop:newAbasis}), so it suffices to prove
    this for these elements. Combining Theorems
    \ref{sym-module-morphism} and \ref{restriction}, we have
    \begin{align*}
    &\left(D_K \circ D_J\right)\left(\bb\uu_w\right)
     = \left(D_K \circ D_J\right)\left(\bb\right)\uu_w
     = \left(\ribbonfcn{K}^\perp \circ
     \ribbonfcn{J}^\perp\right)\left(\bb\right)\uu_w
    \\
    &= \left(\ribbonfcn{J}^\perp \circ
    \ribbonfcn{K}^\perp\right)\left(\bb\right)\uu_w
     = \left(D_J \circ D_K\right)\left(\bb\right)\uu_w
     = \left(D_J \circ D_K\right)\left(\bb\uu_w\right),
    \end{align*}
    where the third equality comes from the commutation of symmetric functions.
\end{proof}

\section{Strong Schur functions}\label{sec:strong}

Lam, Lapointe, Morse, and Shimozono \cite{LLMS10} generalized the $k$-Schur
functions to a larger set of functions called the strong Schur functions. We
use the properties of the operators developed in the previous section to prove
a series of their conjectures \cite[Conjecture~4.18]{LLMS10} regarding these
functions.

\subsection{Strong Schur functions are symmetric functions}

For $u, v \in W$, define the \emph{strong Schur function}
\begin{align*}
    \Strong_{u/v}
    &= \sum_{ u \to \cdots \to v \in \downgraph }
        F_{\ascomp(u \to \dots \to v)},
\end{align*}
where $F_J$ denotes the fundamental quasi-symmetric function indexed by the
composition $J$. In \cite{LLMS10}, it was shown that $\Strong_{u/\id}$ is a
symmetric function; and that when $u$ is $0$-Grassmannian, it is a $k$-Schur
function.

\begin{Remark}
    \label{remark:strong tableaux}
    The definition given here is a reformulation of that in \cite{LLMS10}.
    They defined $\Strong_{u/v}$ as the generating function of ``strong
    tableaux''; the above definition is obtained from theirs by lumping
    together tableaux of the same ``weight'', yielding the expansion in
    terms of monomial quasisymmetric functions below.
\end{Remark}

\begin{Theorem}[{\cite[Conjecture~4.18(1)]{LLMS10}}]\label{thm:LLMS1}
    $\Strong_{u/v}$ is a symmetric function. Furthermore, it expands
    positively in the monomial basis $m_\lambda$ of $\Lambda$:
    \begin{align*}
        \Strong_{u/v}
        &= \sum_{\lambda}
           \Langle D^{\lambda}(\uu_u), \uu_v \Rangle_\AA
           m_\lambda,
    \end{align*}
    where $D^\lambda = D_{\lambda_1} \circ \dots \circ D_{\lambda_l}$.
\end{Theorem}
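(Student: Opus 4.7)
The plan is to expand $\Strong_{u/v}$ using the monomial quasi-symmetric functions and then recognize the resulting coefficients as matrix entries of the commuting operators $D^\lambda$. The whole proof should reduce to matching two combinatorial sums.

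First I would expand each fundamental quasi-symmetric function in the definition. Using the standard identity $F_J = \sum_{J \preceq I} M_I$ (where the sum runs over compositions $I$ refining $J$, matching the paper's $\preceq$ convention from the footnote of the Corollary), I rewrite
\begin{align*}
\Strong_{u/v}
= \sum_{u \to \cdots \to v} F_{\ascomp(u \to \cdots \to v)}
= \sum_I M_I \cdot \#\bigl\{u \to \cdots \to v : \ascomp \preceq I \bigr\}.
\end{align*}
By the definition of $D_J$, the number of paths from $u$ to $v$ with ascent composition exactly $J$ equals $\Langle D_J(\uu_u), \uu_v \Rangle_\AA$. Therefore the coefficient of $M_I$ is $\sum_{J \preceq I} \Langle D_J(\uu_u), \uu_v \Rangle_\AA$.

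Next I invoke the Corollary to the Proposition computing $D_I \circ D_J$, which gives $\sum_{J \preceq I} D_J = D_{i_1} \circ \cdots \circ D_{i_r}$ for $I = [i_1, \dots, i_r]$. Substituting, the coefficient of $M_I$ in $\Strong_{u/v}$ becomes $\Langle (D_{i_1} \circ \cdots \circ D_{i_r})(\uu_u), \uu_v \Rangle_\AA$. Here is where Theorem \ref{cor:commute} enters: because the $D_i$ commute pairwise, this composition depends only on the multiset of parts of $I$, i.e.\ only on the partition $\lambda(I)$ obtained by sorting. So the coefficient of $M_I$ depends only on $\lambda(I)$.

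Finally, collecting the $M_I$'s with the same underlying partition via $m_\lambda = \sum_{I : \lambda(I) = \lambda} M_I$, I obtain
\begin{align*}
\Strong_{u/v} = \sum_{\lambda} \Langle D^\lambda(\uu_u), \uu_v \Rangle_\AA \, m_\lambda,
\end{align*}
which is symmetric and positive in the monomial basis, as desired. There isn't really a hard step here; the main subtlety is just to get the refinement-order conventions straight when expanding $F_J$ and then invoke the Corollary for $\sum_{J \preceq I} D_J$ together with the commutativity theorem at the right moment.
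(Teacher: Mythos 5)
Your proposal is correct and follows essentially the same route as the paper's proof: identify the coefficient of $F_J$ (equivalently, of each $M_I$ after the expansion $F_J=\sum_{I\succeq J}M_I$) with $\Langle D_J(\uu_u),\uu_v\Rangle_\AA$, apply the corollary $\sum_{J\preceq I}D_J=D_{i_1}\circ\cdots\circ D_{i_r}$, and use Theorem \ref{cor:commute} to see that $D^I$ depends only on $\lambda(I)$ before regrouping the $M_I$ into $m_\lambda$. No gaps; the refinement-order conventions are handled correctly.
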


\begin{proof}

    The coefficient of the fundamental quasi-symmetric function $F_J$ in
    $\Strong_{u/v}$ is the number of paths in $\downgraph$ from $u$ to $v$
    with ascent composition equal to $J$. This is precisely
    the coefficient of $\uu_v$ in $D_J(\uu_u)$. Hence,
    \begin{align*}
        \Strong_{u/v}
        &= \sum_{J \models \ell(u)-\ell(v)}
            \langle D_J(\uu_u), \uu_v \rangle_\AA
            F_J.
    \end{align*}
    Recall that $F_J = \sum_{I \succeq J} M_I$, where $M_I$
    denotes the monomial quasi-symmetric function indexed by the
    composition $I = [i_1, \dots, i_r]$. Thus,
    \begin{align*}
        \Strong_{u/v}
        &= \sum_{J}
           \langle D_J(\uu_u), \uu_v \rangle_\AA
           \sum_{I \succeq J} M_I \\
        &= \sum_{I}
           \left(
           \sum_{J \preceq I}
           \langle D_J(\uu_u), \uu_v \rangle_\AA
           \right)
           M_I \\
        &= \sum_{I}
           \langle D^I(\uu_u), \uu_v \rangle_\AA
           M_I
    \end{align*}
    where $D^I = D_{i_1} \circ \cdots \circ D_{i_r}$.
    Since the operators $D_{i}$ and $D_{j}$ commute for all $i$ and $j$,
    the operator $D^I$ depends only on the underlying partition
    $\lambda(I)$ of $I$. Hence,
    \begin{align*}
        \Strong_{u/v}
        &= \sum_{\lambda} \sum_{\lambda(I) = \lambda}
           \langle D^I(\uu_u), \uu_v \rangle_\AA
           M_I \\
        &= \sum_{\lambda}
           \langle D^{\lambda}(\uu_u), \uu_v \rangle_\AA
           \sum_{\lambda(I) = \lambda} M_I \\
        &= \sum_{\lambda}
           \langle D^{\lambda}(\uu_u), \uu_v \rangle_\AA
           m_\lambda,
    \end{align*}
    where $m_\lambda$ is the monomial symmetric function. In particular
    $\Strong_{u/v} \in \Lambda$.
\end{proof}

If $u$ and $v$ are $0$-Grassmannian elements, we write $\Strong_{\mu/\nu}$
instead of $\Strong_{u/v}$, where $\mu$ and $\nu$ are the $k$-bounded
partitions corresponding to $u$ and $v$, respectively. 
It follows from \S\ref{ss:noncommkschurs} (as in the proof of Theorem
\ref{PieriRulePerp}) that the coefficient of $\uu_v$ in $D^\lambda(\uu_u)$ is
the coefficient of $\kschur_\nu$ in the expansion of $D^\lambda(\kschur_\mu)$
in terms of $k$-Schur functions. Thus,
\begin{gather*}
    \Langle \strut D^\lambda(\uu_u), \uu_v \Rangle_\AA
    =
    \Langle D^\lambda(\kschur_\mu), \dualkschur_\nu \Rangle
    =
    \Langle \kschur_\mu, \overline{h_\lambda}\dualkschur_\nu \Rangle
\end{gather*}
where the last equality follows from the fact that the restriction of
$D^\lambda$ to $\BB$ is the adjoint to multiplication by $\overline{h_\lambda}$
(Theorem \ref{restriction}).

\begin{Corollary}
    If $u$ and $v$ are $0$-Grassmannian elements corresponding to the
    $k$-bounded partitions $\mu$ and $\nu$, respectively, then
    \begin{align*}
        \Strong_{\mu/\nu}
        &= \sum_{\lambda} 
           \Langle \kschur_\mu, \overline{h_{\lambda}} \dualkschur_\nu \Rangle
           m_\lambda.
    \end{align*}
\end{Corollary}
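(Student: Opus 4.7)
The plan is to combine Theorem \ref{thm:LLMS1} with the identification of the pairings stated in the paragraph immediately preceding the corollary; almost everything has been set up, so the work is really in spelling out why the $\AA$-pairing $\langle D^\lambda(\uu_u),\uu_v\rangle_\AA$ agrees with $\langle D^\lambda(\kschur_\mu),\dualkschur_\nu\rangle$ when $u=w_\mu$ and $v=w_\nu$.

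First, I would invoke Theorem \ref{thm:LLMS1} to write
\begin{align*}
\Strong_{\mu/\nu} = \Strong_{w_\mu/w_\nu}
= \sum_{\lambda} \Langle D^\lambda(\uu_{w_\mu}), \uu_{w_\nu}\Rangle_\AA\, m_\lambda.
\end{align*}
So it suffices to check, for every partition $\lambda$, the coefficient identity
$\Langle D^\lambda(\uu_{w_\mu}),\uu_{w_\nu}\Rangle_\AA = \Langle \kschur_\mu,\overline{h_\lambda}\dualkschur_\nu\Rangle$.

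Next, I would translate the left-hand side from $\AA$ to $\BB$. Recall from Section \ref{ss:noncommkschurs} that $\nckschur_\mu$ is the unique element of $\BB$ whose $0$-Grassmannian part is $\uu_{w_\mu}$ (with coefficient $1$), and that any other term $\uu_{\tilde w}$ of $\nckschur_\mu$ has $\tilde w \notin W^0$. By Theorem \ref{restriction}, $D^\lambda(\BB)\subseteq \BB$, so $D^\lambda(\nckschur_\mu)$ lies in $\BB$ and has a well-defined expansion in $\{\nckschur_\nu\}$. Since strong strips that end at a $0$-Grassmannian element must begin at a $0$-Grassmannian element (Proposition 2.6 of \cite{LLMS10}), the coefficient of $\uu_{w_\nu}$ in $D^\lambda(\uu_{w_\mu})$ equals the coefficient of $\uu_{w_\nu}$ in $D^\lambda(\nckschur_\mu)$, which in turn equals the coefficient of $\nckschur_\nu$ in the $k$-Schur expansion of $D^\lambda(\nckschur_\mu)$. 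Under the isomorphism $\BB\cong\Lambda_{(k)}$, this is $\Langle D^\lambda(\kschur_\mu),\dualkschur_\nu\Rangle$.

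Finally, I would apply Theorem \ref{restriction} to the composition $J=[\lambda_1,\dots,\lambda_l]$: the restriction of $D^\lambda=D_{\lambda_1}\circ\cdots\circ D_{\lambda_l}$ to $\BB$ is $\overline{h_\lambda}^\perp$ (since each $D_{\lambda_i}$ restricts to $\overline{h_{\lambda_i}}^\perp$ and these commute). Hence
\begin{align*}
\Langle D^\lambda(\kschur_\mu),\dualkschur_\nu\Rangle
= \Langle \overline{h_\lambda}^\perp(\kschur_\mu),\dualkschur_\nu\Rangle
= \Langle \kschur_\mu,\overline{h_\lambda}\dualkschur_\nu\Rangle,
\end{align*}
completing the identification and hence the corollary. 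The only slightly subtle step is the second one, namely the extraction of the $\nckschur_\nu$-coefficient from $D^\lambda(\uu_{w_\mu})$ as a coefficient in $\AA$; everything else is a direct appeal to results already proven.
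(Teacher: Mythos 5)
Your proposal is correct and follows essentially the same route as the paper: the paper also specializes Theorem \ref{thm:LLMS1} to $u=w_\mu$, $v=w_\nu$, identifies $\Langle D^\lambda(\uu_u),\uu_v\Rangle_\AA$ with $\Langle D^\lambda(\kschur_\mu),\dualkschur_\nu\Rangle$ via the argument from the proof of Theorem \ref{PieriRulePerp} (uniqueness of the $0$-Grassmannian term of $\nckschur_\mu$ plus the fact that strong strips ending at $0$-Grassmannian elements begin at $0$-Grassmannian elements), and then applies Theorem \ref{restriction} to replace $D^\lambda$ by $\overline{h_\lambda}^\perp$. The only detail you elide, exactly as the paper does, is that the $0$-Grassmannian argument is applied iteratively since $D^\lambda$ is a composition of the operators $D_{\lambda_i}$.
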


\subsection{Strong Schur functions belong to $\Lambda_{(k)}$}

Next we verify the second part of Conjecture~4.18 from \cite{LLMS10}.
Recall that for a linear operator $f$ on $\BB$, we denote by $\widehat{f}$ its
extension to $\AA$ as defined in \S\ref{ss:extensions}.

\begin{Theorem}\label{thm:LLMS2}
    Let $u, v \in W$. The strong Schur function
    $\Strong_{u/v}$ lies in $\Lambda_{(k)}$.
    Furthermore, we have the expansion in
    homogeneous symmetric functions:
    \begin{align*}
        \Strong_{u/v}
        =
        \sum_{\lambda\in\kBounded} \Langle \widehat{m_\lambda^\perp} (\uu_u), \uu_v\Rangle_\AA h_\lambda.
    \end{align*}
\end{Theorem}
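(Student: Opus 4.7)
The plan is to convert the $m$-basis expansion from Theorem \ref{thm:LLMS1} into an $h$-basis expansion, and then to recognize the resulting coefficient as a matrix coefficient of $\widehat{m_\lambda^\perp}$. Since $\{h_\lambda\}$ and $\{m_\mu\}$ are dual bases for the Hall pairing on $\Lambda$, the coefficient of $h_\lambda$ in any $f \in \Lambda$ equals $\langle f, m_\lambda\rangle_\Lambda$. Applying this to $f = \Strong_{u/v}$ and substituting the expansion from Theorem \ref{thm:LLMS1} gives
\[
\Strong_{u/v} = \sum_\lambda \Big( \sum_\mu \Langle D^\mu(\uu_u), \uu_v\Rangle_\AA\, \langle m_\mu, m_\lambda\rangle_\Lambda \Big)\, h_\lambda,
\]
so the task reduces to identifying the operator $T_\lambda := \sum_\mu \langle m_\mu, m_\lambda\rangle_\Lambda\, D^\mu$ as $\widehat{m_\lambda^\perp}$ when $\lambda \in \kBounded$ and as $0$ otherwise.

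For the key identification I would first establish that $D^\mu = \widehat{\overline{h_\mu}^\perp}$. Each factor $D_{\mu_i} = \widehat{\overline{h_{\mu_i}}^\perp}$ is Theorem \ref{restriction} applied to $J = [\mu_i]$ (using $s_{[\mu_i]} = h_{\mu_i}$), and the extension $\widehat{\cdot}$ commutes with composition: this follows directly from Theorem \ref{sym-module-morphism}, since any $D_i$ carries a product $\bb\uu_{w_{(0)}}$ (with $\bb \in \BB$, $w_{(0)} \in W_0$) to $D_i(\bb)\,\uu_{w_{(0)}}$, so iterated applications only act on the $\BB$-factor. Since the extension is also linear in its argument, one concludes
\[
T_\lambda = \widehat{\Big( \sum_\mu \langle m_\mu, m_\lambda\rangle_\Lambda\, \overline{h_\mu}^\perp \Big)}.
\]
Next, the dual-basis identity $m_\lambda = \sum_\mu \langle m_\mu, m_\lambda\rangle_\Lambda\, h_\mu$ in $\Lambda$ (read off from $\langle h_\nu, m_\mu\rangle = \delta_{\nu\mu}$) projects to $\overline{m_\lambda} = \sum_\mu \langle m_\mu, m_\lambda\rangle_\Lambda\, \overline{h_\mu}$ in $\Lambda^{(k)}$. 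Taking $\perp$ (a linear operation on $\Lambda^{(k)}$) yields $\sum_\mu \langle m_\mu, m_\lambda\rangle_\Lambda\, \overline{h_\mu}^\perp = \overline{m_\lambda}^\perp$ as operators on $\BB \cong \Lambda_{(k)}$, whence $T_\lambda = \widehat{\overline{m_\lambda}^\perp}$.

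For $\lambda \in \kBounded$, $\overline{m_\lambda}$ is a basis element of $\Lambda^{(k)}$, so $T_\lambda = \widehat{m_\lambda^\perp}$ and the $h_\lambda$-coefficient is the desired $\Langle \widehat{m_\lambda^\perp}(\uu_u), \uu_v\Rangle_\AA$; for $\lambda \notin \kBounded$, $\overline{m_\lambda} = 0$ in $\Lambda^{(k)}$, forcing $T_\lambda = 0$ and killing the $h_\lambda$ term. This last step is where I expect the main subtlety to lie: the individual operators $D^\mu$ appearing in $T_\lambda$ are generally nonzero for $\mu$ not $k$-bounded, and their cancellation at the level of $\AA$ is not visible termwise — it only emerges after repackaging everything through the extension map and invoking the relation $\overline{m_\lambda} = 0$ in the quotient $\Lambda^{(k)}$. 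Once this bookkeeping is settled, the expansion collapses to the stated sum over $\kBounded$ and simultaneously witnesses $\Strong_{u/v} \in \Lambda_{(k)}$, proving both assertions of the theorem.
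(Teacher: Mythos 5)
Your proof is correct and takes essentially the same route as the paper's: both convert the monomial expansion of Theorem \ref{thm:LLMS1} into the $h$-basis via the Hall pairing, identify $D^\mu$ with $\widehat{\overline{h_\mu}^\perp}$ via Theorems \ref{sym-module-morphism} and \ref{restriction}, and conclude using that $m_\lambda^\perp = 0$ (i.e.\ $\overline{m_\lambda}=0$ in $\Lambda^{(k)}$) for $\lambda \notin \kBounded$. The only difference is bookkeeping: you extract $h_\lambda$-coefficients using the Gram matrix $\langle m_\mu, m_\lambda\rangle_\Lambda$ and the identity $m_\lambda = \sum_\mu \langle m_\mu, m_\lambda\rangle_\Lambda h_\mu$, whereas the paper substitutes $h_\lambda = \sum_\mu L_{\lambda,\mu} m_\mu$ into the operator and reorders the double sum.
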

\begin{proof}
Since the $m_\mu$ form a basis of $\Lambda$, there exist coefficients
$L_{\lambda,\mu}$ for which
$h_\lambda = \sum_{\mu} L_{\lambda,\mu} m_\mu$. Hence,
\begin{align*}
    \Strong_{u/v}
    &=
    \sum_\lambda \Langle D^\lambda(\uu_u), \uu_v\Rangle_\AA m_\lambda
    \\&=
    \sum_\lambda \Langle \widehat{h_\lambda^\perp}(\uu_u), \uu_v\Rangle_\AA m_\lambda
    \\&=
    \sum_{\lambda,\mu} \Langle L_{\lambda,\mu} \widehat{m_\mu^\perp} (\uu_u), \uu_v\Rangle_\AA m_\lambda
    \\&=
    \sum_{\mu} \Langle \widehat{m_\mu^\perp} (\uu_u), \uu_v\Rangle_\AA \sum_{\lambda}L_{\lambda,\mu}m_\lambda
    \\&=
    \sum_{\mu} \Langle \widehat{m_\mu^\perp} (\uu_u), \uu_v\Rangle_\AA h_\mu
\end{align*}
Since $m_\mu^\perp = 0$ for any partition $\mu$ that is not
$k$-bounded, the above summation runs over $k$-bounded partitions.
\end{proof}

\subsection{Expansions of strong Schur functions}

Since $\Strong_{u/v}$ lies in $\Lambda_{(k)}$, it has an expansion in terms of
$k$-Schur functions. The third part of Conjecture~4.18 of \cite{LLMS10} deals
with the coefficients in this expansion.

\begin{Theorem}\label{thm:LLMS3}
    Let $u, v \in W$.
    \begin{align*}
    \Strong_{u/v}
    &=
    \sum_{\lambda\in\kBounded} \Langle \widehat{\dualkschur_\lambda^\perp} (\uu_u), \uu_v\Rangle_\AA \kschur_\lambda
    \end{align*}
\end{Theorem}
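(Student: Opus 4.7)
The plan is to derive Theorem \ref{thm:LLMS3} as a direct corollary of Theorem \ref{thm:LLMS2} by a change-of-basis argument inside $\Lambda_{(k)}$. The starting point is the dual-basis expansion $h_\mu = \sum_{\lambda \in \kBounded} \Langle h_\mu, \dualkschur_\lambda\Rangle\,\kschur_\lambda$, which is immediate from $\Langle \kschur_\lambda, \dualkschur_\mu\Rangle = \delta_{\lambda,\mu}$. Substituting this into the formula of Theorem \ref{thm:LLMS2} and swapping the order of summation gives
\begin{equation*}
\Strong_{u/v}
= \sum_{\lambda \in \kBounded}
\left( \sum_{\mu \in \kBounded} \Langle h_\mu, \dualkschur_\lambda\Rangle\,
\Langle \widehat{m_\mu^\perp}(\uu_u), \uu_v \Rangle_\AA \right)
\kschur_\lambda,
\end{equation*}
so the problem reduces to identifying the inner sum with $\Langle \widehat{\dualkschur_\lambda^\perp}(\uu_u),\uu_v\Rangle_\AA$.

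To close the argument, I would dualize inside $\Lambda^{(k)}$: since $\{m_\mu\}_{\mu_1 \leq k}$ and $\{h_\mu\}_{\mu_1 \leq k}$ are dual bases of $\Lambda^{(k)}$ and $\Lambda_{(k)}$ under the induced pairing, the same coefficients appear on the other side, yielding $\dualkschur_\lambda = \sum_{\mu \in \kBounded} \Langle h_\mu, \dualkschur_\lambda\Rangle\, m_\mu$. The operation $f \mapsto f^\perp$ is linear in $f$ (because multiplication is), and the extension $\widehat{(\cdot)}$ defined in \S\ref{ss:extensions} is linear by its definition on the basis $\{\bb_w\}$ of $\AA$. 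Combining these two linearities gives
\begin{equation*}
\widehat{\dualkschur_\lambda^\perp}
= \sum_{\mu \in \kBounded} \Langle h_\mu, \dualkschur_\lambda\Rangle\,\widehat{m_\mu^\perp}
\end{equation*}
as operators on $\AA$, and pairing this against $\uu_u$ and $\uu_v$ matches the inner sum above.

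There is no substantive obstacle here: the proof is a bookkeeping exercise leveraging the hard work already done in Theorem \ref{thm:LLMS2}, together with the two dualities $\Langle h_\mu, m_\lambda\Rangle_\Lambda = \delta_{\lambda,\mu}$ and $\Langle \kschur_\mu, \dualkschur_\lambda\Rangle = \delta_{\lambda,\mu}$. The only points needing a line of verification are the linearity of the $\perp$ construction and the linearity of the extension $\widehat{(\cdot)}\colon \End(\BB)\to\End(\AA)$, both of which are immediate from the definitions.
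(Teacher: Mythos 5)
Your proposal is correct and is essentially the paper's own proof: the paper likewise starts from Theorem \ref{thm:LLMS2}, substitutes the expansion $h_\lambda = \sum_\tau K^{(k)}_{\tau,\lambda}\kschur_\tau$ (your coefficients $\Langle h_\lambda, \dualkschur_\tau\Rangle$ are exactly these $k$-Kostka numbers), swaps the sums, and uses $\dualkschur_\tau = \sum_\lambda K^{(k)}_{\tau,\lambda} m_\lambda$ together with linearity of $f\mapsto f^\perp$ and of the extension $\widehat{(\cdot)}$ to assemble $\widehat{\dualkschur_\tau^\perp}$. The only difference is notational, so there is nothing further to add.
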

\begin{proof}
    By using the expansions
    $h_\lambda = \sum_\tau K^{(k)}_{\tau, \lambda} \kschur_\tau$
    and
    $\dualkschur_\tau = \sum_\lambda K^{(k)}_{\tau, \lambda} m_\lambda$,
\begin{align*}
    \Strong_{u/v}
    &=
    \sum_{\lambda\in\kBounded} \Langle \widehat{m_\lambda^\perp} (\uu_u), \uu_v\Rangle_\AA h_\lambda
    \\&=
    \sum_{\lambda\in\kBounded} \Langle \widehat{m_\lambda^\perp} (\uu_u), \uu_v\Rangle_\AA \sum_{\tau\in\kBounded} K^{(k)}_{\tau,\lambda} \kschur_\tau
    \\&=
    \sum_{\tau\in\kBounded} \Langle \sum_{\lambda\in\kBounded} K^{(k)}_{\tau,\lambda} \widehat{m_\lambda^\perp} (\uu_u), \uu_v\Rangle_\AA \kschur_\tau
    \\&=
    \sum_{\tau\in\kBounded} \Langle \widehat{\dualkschur_\tau^\perp}(\uu_u), \uu_v\Rangle_\AA \kschur_\tau.
    \qedhere
\end{align*}
\end{proof}

If $u$ and $v$ are $0$-Grassmannian, with $u = w_\mu$ and $v =
w_\lambda$, then the coefficient in the above expression reduces
to
\begin{gather*}
\Langle \widehat{\dualkschur_\lambda^\perp}(\uu_u), \uu_v\Rangle_\AA
=
\Langle {\dualkschur_\lambda^\perp}\left( \kschur_\mu \right), \dualkschur_\nu\Rangle
=
\Langle \kschur_\mu, \dualkschur_\lambda \dualkschur_\nu\Rangle.
\end{gather*}
This establishes the third part of Conjecture~4.18 of \cite{LLMS10} for
$0$-Grassmannian elements.

\begin{Corollary}[{\cite[Conjecture~4.18(3)]{LLMS10}}]
    \label{Conj4.18c}
Let $\mu$ and $\nu$ be $k$-bounded partitions.
The coefficient of $\kschur_\lambda$ in $\Strong_{\mu/\nu}$ is the
coefficient of $\dualkschur_\mu$ in $\dualkschur_\lambda \dualkschur_\nu$:
    \begin{align*}
    \Strong_{\mu/\nu}
    &=
    \sum_{\lambda\in\kBounded} \Langle \kschur_\mu, \dualkschur_\lambda \dualkschur_\nu \Rangle \kschur_\lambda.
    \end{align*}
\end{Corollary}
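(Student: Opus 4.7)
The plan is to specialize Theorem \ref{thm:LLMS3} to $u = w_\mu$ and $v = w_\nu$ (both $0$-Grassmannian), and then simplify the resulting coefficient to $\Langle \kschur_\mu, \dualkschur_\lambda \dualkschur_\nu \Rangle$. Concretely, I would establish
\[
\Langle \widehat{\dualkschur_\lambda^\perp}(\uu_{w_\mu}), \uu_{w_\nu}\Rangle_\AA = \Langle \kschur_\mu, \dualkschur_\lambda \dualkschur_\nu \Rangle,
\]
after which substituting into Theorem \ref{thm:LLMS3} yields the corollary.

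To carry out this simplification, I would first reduce to the case of a single $D_J$. Writing $\dualkschur_\lambda = \sum_J c_J \ribbonfcn{J}$ as a combination of ribbon Schur functions in $\Lambda^{(k)}$, Theorem \ref{restriction} together with the linearity of the extension $\widehat{\cdot}$ of \S\ref{ss:extensions} gives $\widehat{\dualkschur_\lambda^\perp} = \sum_J c_J D_J$. It therefore suffices to prove, for every composition $J$,
\[
\Langle D_J(\uu_{w_\mu}), \uu_{w_\nu}\Rangle_\AA = \Langle \kschur_\mu, \ribbonfcn{J}\dualkschur_\nu\Rangle,
\]
since summing against $c_J$ and invoking the adjointness $\Langle \ribbonfcn{J}^\perp f, g\Rangle = \Langle f, \ribbonfcn{J} g\Rangle$ assembles the desired identity.

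The left side of this displayed equation counts paths in $\downgraph$ from $w_\mu$ to $w_\nu$ whose label sequence has ascent composition $J$. The key step mirrors the proof of Theorem \ref{PieriRulePerp}: any such path decomposes into strong strips of sizes $j_1, \ldots, j_l$, and iterated application of \cite[Proposition~2.6]{LLMS10}---starting from the final endpoint $w_\nu \in W^0$ and propagating backwards---forces every strong-strip boundary of the decomposition to lie in $W^0$. Consequently the count equals the coefficient of $\nckschur_\nu$ in $D_J(\nckschur_\mu) \in \BB$, which via the isomorphism $\BB \cong \Lambda_{(k)}$ and Theorem \ref{restriction} equals $\Langle \ribbonfcn{J}^\perp \kschur_\mu, \dualkschur_\nu\Rangle$, as required.

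The main obstacle is this iterated use of \cite[Proposition~2.6]{LLMS10}. The one-step proposition controls strong strip endpoints but says nothing about intermediate vertices of an individual strong strip; fortunately the counting argument only needs the strong-strip boundary points to be $0$-Grassmannian, so no further combinatorics beyond that already used in the proof of Theorem \ref{PieriRulePerp} is required.
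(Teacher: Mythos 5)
Your proposal is correct and takes essentially the paper's own route: specialize Theorem \ref{thm:LLMS3} to $u = w_\mu$, $v = w_\nu$, and reduce the coefficient $\Langle \widehat{\dualkschur_\lambda^\perp}(\uu_{w_\mu}), \uu_{w_\nu}\Rangle_\AA$ to $\Langle \kschur_\mu, \dualkschur_\lambda \dualkschur_\nu\Rangle$ using the fact that $D$-type operators stabilize $\BB$, the unique $0$-Grassmannian term of $\nckschur_\mu$, the iterated use of \cite[Proposition~2.6]{LLMS10} exactly as in Theorem \ref{PieriRulePerp}, and the adjointness from Theorem \ref{restriction}. The only cosmetic difference is that you expand $\dualkschur_\lambda$ in ribbon Schur functions and argue with the operators $D_J$, while the paper performs the same reduction through the operators $D^\lambda = \widehat{h_\lambda^\perp}$ already treated after Theorem \ref{thm:LLMS1}; the two bookkeeping choices are interchangeable.
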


\begin{Corollary}
Let $\mu$ and $\nu$ be $k$-bounded partitions. Then the skew $k$-Schur function is:
    \begin{align*}
    \kschur_{\mu/\nu} := {\dualkschur_\nu^\perp}\left(\kschur_{\mu}\right)
    &=
    \Strong_{\mu/\nu}.
    \end{align*}
\end{Corollary}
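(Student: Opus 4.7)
The plan is to derive the identity directly from the preceding corollary by unfolding the definition of $\dualkschur_\nu^\perp$ as the adjoint to multiplication by $\dualkschur_\nu$. The key point is that both sides are elements of $\Lambda_{(k)}$, so to identify them it suffices to show that they have the same expansion in the $k$-Schur basis $\{\kschur_\lambda\}$.

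First I would expand $\dualkschur_\nu^\perp(\kschur_\mu)$ in the $k$-Schur basis. Since $\{\kschur_\lambda\}$ and $\{\dualkschur_\lambda\}$ are dual bases with respect to $\langle \cdot, \cdot \rangle$, any $f \in \Lambda_{(k)}$ satisfies $f = \sum_\lambda \langle f, \dualkschur_\lambda\rangle \kschur_\lambda$. Applying this to $f = \dualkschur_\nu^\perp(\kschur_\mu)$, and then using the defining property of the adjoint $\dualkschur_\nu^\perp$ (multiplication by $\dualkschur_\nu$ in $\Lambda^{(k)}$), I obtain
\begin{align*}
    \dualkschur_\nu^\perp(\kschur_\mu)
    = \sum_{\lambda \in \kBounded} \Langle \dualkschur_\nu^\perp(\kschur_\mu), \dualkschur_\lambda \Rangle \kschur_\lambda
    = \sum_{\lambda \in \kBounded} \Langle \kschur_\mu, \dualkschur_\nu \dualkschur_\lambda \Rangle \kschur_\lambda.
\end{align*}

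Then I would invoke the commutativity of $\Lambda^{(k)}$ to rewrite $\dualkschur_\nu \dualkschur_\lambda = \dualkschur_\lambda \dualkschur_\nu$, at which point the right-hand side matches exactly the expression for $\Strong_{\mu/\nu}$ given in Corollary \ref{Conj4.18c}. Comparing coefficients of $\kschur_\lambda$ concludes the proof.

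I do not foresee any real obstacle: the heavy lifting has already been done in Theorem \ref{thm:LLMS3} and Corollary \ref{Conj4.18c}, and the remaining step is a one-line application of the adjunction defining $\dualkschur_\nu^\perp$ together with the definition $\kschur_{\mu/\nu} := \dualkschur_\nu^\perp(\kschur_\mu)$. The only thing to be careful about is to note that the expansion of $\dualkschur_\nu^\perp(\kschur_\mu)$ in the $k$-Schur basis only involves $\kschur_\lambda$ for $k$-bounded partitions $\lambda$, which is automatic since $\dualkschur_\nu^\perp$ is an operator on $\Lambda_{(k)}$.
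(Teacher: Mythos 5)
Your proposal is correct and follows essentially the same route as the paper: both compute the coefficient of $\kschur_\lambda$ in $\dualkschur_\nu^\perp(\kschur_\mu)$ via the adjunction $\Langle \dualkschur_\nu^\perp(\kschur_\mu), \dualkschur_\lambda \Rangle = \Langle \kschur_\mu, \dualkschur_\nu \dualkschur_\lambda \Rangle$ and then match it, using commutativity of $\Lambda^{(k)}$, against the coefficient given by Corollary \ref{Conj4.18c}. No gaps.
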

\begin{proof}
By Corollary \ref{Conj4.18c}, we have $\Strong_{\mu} = \kschur_\mu$.
Thus, the coefficient of $\kschur_\lambda$ in the left-hand side is
$
    \langle
        {\dualkschur_\nu^\perp}(\kschur_{\mu}),
        \dualkschur_\lambda
    \rangle
    =
    \langle
        \kschur_{\mu},
        \dualkschur_\nu \dualkschur_\lambda
    \rangle,
$
which is the coefficient of $\kschur_\lambda$ in $\Strong_{\mu/\nu}$.
\end{proof}

Consequently, we obtain an explicit combinatorial description of the skew
$k$-Schur function $\kschur_{\lambda/\mu}$ since the strong Schur function
$\Strong_{\mu/\nu}$ has an explicit combinatorial description in terms of
``strong tableaux'' (see \cite{LLMS10} for details).

\bibliographystyle{halpha}
\bibliography{references} 

\end{document}